\theoremstyle{plain}
\newtheorem{theorem}{Theorem}[section]
\newtheorem{proposition}[theorem]{Proposition}
\newtheorem{lemma}[theorem]{Lemma}
\newtheorem{conjecture}[theorem]{Conjecture}
\theoremstyle{definition}
\theoremstyle{remark}
\newtheorem*{remark}{Remark}
\newtheorem*{remarks}{Remarks}
\newcommand{\ol}{\overline}
\newcommand{\CC}{{\mathbb C}}
\newcommand{\DD}{{\mathbb D}}
\newcommand{\TT}{{\mathbb T}}
\newcommand{\cC}{{\mathcal C}}
\newcommand{\cH}{{\mathcal H}}
\newcommand{\cK}{{\mathcal K}}
\renewcommand{\Re}{\operatorname{Re}}
\DeclareMathOperator{\conv}{conv}
\let\Re\undefined
\DeclareMathOperator{\Re}{Re}
\renewcommand{\tilde}{\widetilde}
\begin{document}

\date{\today}

\title{An abstract approach to the Crouzeix conjecture}

\author{Rapha\"el Clou\^atre}
\address{Department of Mathematics, University of Manitoba,
186 Dysart Road, Winnipeg (Manitoba), Canada R3T 2N2.}
\email{raphael.clouatre@umanitoba.ca}
\thanks{Clou\^atre partially supported by an NSERC Discovery Grant.}

\author{Ma\"eva Ostermann}
\address{D\'epartement de math\'ematiques et de statistique, Universit\'e Laval,
Qu\'ebec City (Qu\'ebec),  Canada G1V 0A6.}
\email{maeva.ostermann.1@ulaval.ca}
\thanks{Ostermann supported by a FRQNT doctoral scholarship}

\author{Thomas Ransford}
\address{D\'epartement de math\'ematiques et de statistique, Universit\'e Laval,
Qu\'ebec City (Qu\'ebec),  Canada G1V 0A6.}
\email{thomas.ransford@mat.ulaval.ca}
\thanks{Ransford supported by grants from NSERC and the Canada Research Chairs program.}

\begin{abstract}
Let $A$ be a uniform algebra, $\theta:A\to M_n(\CC)$ be a continuous homomorphism
and $\alpha:A\to A$ be an antilinear contraction such that
\[
\|\theta(f)+\theta(\alpha(f))^*\|\le 2\|f\| \quad(f\in A).
\]
We show that $\|\theta\|\le 1+\sqrt{2}$, and that $1+\sqrt2$ is sharp. 
We  conjecture that, if further  $\alpha(1)=1$, then
we may  conclude that $\|\theta\|\le2$. This would 
yield a positive solution to the Crouzeix conjecture on numerical ranges. 
In support of our conjecture,
we prove that it is true in two special cases.
We also discuss a completely bounded version of our conjecture that brings into play
ideas from dilation theory.
\end{abstract}

\subjclass[2010]{primary 46J10, secondary  47A12, 47A30}

\keywords{Crouzeix conjecture, uniform algebra, homomorphism, completely bounded map}

\maketitle


\section{Introduction}\label{S:intro}

In this article, we propose an approach to a conjecture about numerical ranges,
the so-called Crouzeix conjecture, through the study of certain homomorphisms on uniform algebras. Recall that given a compact Hausdorff space $X$, by a uniform algebra we mean a unital norm-closed subalgebra $A\subset C(X)$.  Throughout the paper, homomorphisms of are always assumed to be unital. Further, we denote by $M_n(\CC)$ the
algebra of complex $n\times n$ matrices.

Our starting point is the following  theorem.

\begin{theorem}\label{T:basic}
Let $A$ be a uniform algebra,
let $\theta:A\to M_n(\CC)$ be a continuous homomorphism,
and let $\alpha:A\to A$ be an antilinear contraction.
Define a linear map $\theta_\alpha:A\to M_n(\CC)$ by
\begin{equation}\label{E:thetaalpha}
\theta_\alpha(f):=\frac{1}{2}\Bigl(\theta(f)+\theta(\alpha(f))^*\Bigr) \quad(f\in A).
\end{equation}
If $\|\theta_\alpha\|\le 1$, then
\begin{equation}\label{E:1+sqrt2}
\|\theta\|\le 1+\sqrt{2}.
\end{equation}
Moreover, the constant $(1+\sqrt{2})$ is sharp.
\end{theorem}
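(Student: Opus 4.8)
The plan is to separate the statement into two parts: (1) the inequality $\|\theta\|\le 1+\sqrt2$, and (2) the sharpness of the constant.

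For part (1), the key idea is to exploit that $\theta$ is a homomorphism, so that the numerical-range-type constraint $\|\theta_\alpha\|\le1$ can be leveraged on powers. Concretely, for $f\in A$ with $\|f\|\le1$, write $\theta(f) = \theta_\alpha(f) + \tfrac12(\theta(f) - \theta(\alpha(f))^*)$. The bound on $\theta_\alpha$ controls the "symmetric" part, but we need to control the full operator. The standard trick in such situations is to feed polynomials $p(f)$ into the hypothesis: since $\theta$ is a homomorphism, $\theta(p(f)) = p(\theta(f))$, so the constraint $\|p(\theta(f)) + \theta(\alpha(p(f)))^*\| \le 2\|p(f)\|_\infty$ relates the polynomial functional calculus of the matrix $\theta(f)$ to the supremum norm of $p$ on a suitable set. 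One then applies this with a cleverly chosen family of polynomials (for instance Möbius transformations, or polynomials designed to peak at the spectrum of $\theta(f)$) to bootstrap a bound on $\|\theta(f)\|$ itself. The appearance of $1+\sqrt2$ strongly suggests that the optimal choice will involve a quadratic (or degree-two rational) function, and that at the end one optimizes a scalar expression whose extremum produces $\sqrt2$.

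**Main obstacle.**

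I expect the hard part to be handling the antilinear "twist" $\alpha$ and the adjoint in $\theta(\alpha(f))^*$: because $\alpha$ need not respect multiplication in any way, one cannot simply write $\theta(\alpha(p(f)))$ in terms of $\theta(\alpha(f))$. So the polynomial-substitution trick, while natural, must be set up so that the term $\theta(\alpha(p(f)))^*$ is merely bounded in norm (using $\|\alpha(p(f))\| \le \|p(f)\|_\infty$ and $\|\theta\|$ itself) rather than computed exactly — which introduces $\|\theta\|$ on both sides of the inequality and forces a self-improving (fixed-point) estimate. Making that bootstrap close cleanly, with the right constant, is the crux. A convenient reformulation is to set $c := \|\theta\|$ and derive an inequality of the form $c \le F(c)$ for some concave $F$, whose relevant fixed point is $1+\sqrt2$.

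**Sharpness.**

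For the sharpness of $1+\sqrt2$, the plan is to exhibit an explicit example: a uniform algebra $A$ (most likely the disc algebra $A(\DD)$ or a finite-dimensional quotient thereof), an integer $n$, a homomorphism $\theta:A\to M_n(\CC)$ given by $\theta(f) = f(T)$ for a suitably chosen nilpotent or triangular matrix $T$, and an explicit antilinear contraction $\alpha$ — quite possibly $\alpha(f)(z) = \overline{f(\bar z)}$ composed with some adjustment, or $\alpha = 0$ on a subspace — for which $\|\theta_\alpha\|\le1$ while $\|\theta\|$ is arbitrarily close to $1+\sqrt2$. A natural candidate is a $2\times2$ (or small) matrix $T$ with $\|f(T)\|$ computable in closed form; one then checks the constraint $\|f(T) + f(\alpha(f))^*\|\le2\|f\|$ directly and verifies, by testing on a specific $f$ (e.g. $f(z)=z$ or a Blaschke factor), that $\|\theta\|$ attains the bound in the limit. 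The verification that the constraint holds for \emph{all} $f\in A$, not just the test function, is where some care is needed, and this is typically done by reducing to $f$ a polynomial of low degree via an extremal/duality argument, or by a direct operator-norm computation exploiting the special structure of $T$.
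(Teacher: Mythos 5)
Your outline contains a genuine gap in the main estimate. The bootstrap you propose --- substitute $p(f)$ into the hypothesis, keep $p(\theta(f))=\theta(p(f))$ exactly, and control the twisted term only via $\|\theta(\alpha(p(f)))^*\|\le\|\theta\|\,\|p(f)\|$ --- does not close. Writing $c=\|\theta\|$, that route gives $\|p(\theta(f))\|\le 2\|p(f)\|+c\|p(f)\|$, i.e.\ $c\le 2+c$ after optimizing over $p$ and $f$, which is vacuous; no choice of Möbius or quadratic test functions repairs this, because the step ``bound the $\alpha$-term merely in norm'' discards exactly the structure that produces the gain. The paper's proof uses no polynomial substitution at all. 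Instead it expands
\[
\|\theta(f)\|^4=\|\theta(f)\theta(f)^*\theta(f)\theta(f)^*\|,
\]
replaces the first $\theta(f)^*$ by $\bigl(\theta(f)+\theta(\alpha(f))^*\bigr)^*-\theta(\alpha(f))$, and then observes that the error term $\theta(f)\theta(\alpha(f))\theta(f)$ equals $\theta\bigl(f\alpha(f)f\bigr)$ because $\theta$ is multiplicative; since $\alpha$ is a contraction and the norm on $A$ is submultiplicative, $\|f\alpha(f)f\|\le\|f\|^3$, so this term costs only one factor of $\|\theta\|$ rather than three. This yields $\|\theta\|^4\le 2\|\theta\|^3+\|\theta\|^2$, hence $\|\theta\|^2-2\|\theta\|-1\le 0$ and $\|\theta\|\le 1+\sqrt2$. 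The crux you correctly identify (that $\alpha$ respects no multiplication) is thus handled not by your proposed device but by arranging for $\theta(\alpha(f))$ to appear \emph{sandwiched between copies of} $\theta(f)$, so that multiplicativity of $\theta$ can be applied to the whole product. Your outline never reaches this idea, and without it the self-improving inequality you hope for does not materialize.

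For sharpness, the paper simply cites the explicit example of Ransford and Schwenninger; your plan to build a small triangular example with $\alpha(1)=-1$ is pointed in a reasonable direction but is not carried out, and in particular you give no mechanism for verifying $\|\theta_\alpha\|\le 1$ over all of $A$, which you yourself flag as the delicate point. As written, neither half of the statement is established.
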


We shall establish this result in \S\ref{S:basic}.
The example demonstrating the sharpness of the constant $(1+\sqrt{2})$
has the feature that $\alpha(1)=-1$.
We believe that if $\alpha$ is at the other extreme, namely if it satisfies $\alpha(1)=1$,
then Theorem~\ref{T:basic} may be improved as follows.

\begin{conjecture}\label{T:conj}
Let $A$ be a  uniform algebra,  $\theta:A\to M_n(\CC)$ be a continuous
homomorphism,
and let $\alpha:A\to A$
be a \textbf{unital} antilinear contraction.
If $\theta_\alpha$, defined by \eqref{E:thetaalpha}, satisfies $\|\theta_\alpha\|\le1$, then
\begin{equation}\label{E:2}
\|\theta\|\le 2.
\end{equation}
\end{conjecture}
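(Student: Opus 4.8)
The plan is to reduce Conjecture~\ref{T:conj} to a quantitative inequality for a single extremal element of $A$, and then to exploit the hypothesis $\alpha(1)=1$ through the Blaschke self-maps of the disc. Since $\theta$ is continuous and $M_n(\CC)$ is finite-dimensional, it suffices to fix $f\in A$ with $\|f\|\le1$ and to prove $\|\theta(f)\|\le2$. Assuming $\theta(f)\neq0$, set $p:=\|\theta(f)\|$ and pick unit vectors $u,v\in\CC^n$ with $\theta(f)u=pv$; then automatically $\theta(f)^*v=pu$. Writing $g:=\alpha(f)$ (so $\|g\|\le1$) and $M:=2\theta_\alpha(f)=\theta(f)+\theta(g)^*$ (so $\|M\|\le2$ by hypothesis), expanding $\|Mu\|^2$ and using $\theta(f)u=pv$ yields the exact identity
\[
\|Mu\|^2=p^2+s^2+2p\,\Re\langle\theta(g)v,u\rangle,\qquad s:=\|\theta(g)^*u\|.
\]
Since $\|Mu\|\le2$, the conjecture would follow immediately from the bound $\Re\langle\theta(\alpha f)v,u\rangle\ge0$, and in general it reduces to forcing this ``cross term'' to be not too negative. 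The sharpness example for Theorem~\ref{T:basic}, where $\alpha(1)=-1$, is precisely the situation in which $\Re\langle\theta(\alpha f)v,u\rangle$ is negative; the task is to show that the opposite normalisation $\alpha(1)=1$ prevents this.

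To exploit $\alpha(1)=1$ we use two observations. First, since $\alpha$ is antilinear the map $f\mapsto\theta(\alpha(f))^*$ is \emph{linear}, so $\theta_\alpha$ is a unital linear contraction $A\to M_n(\CC)$; one can then try to extend it to the operator system $\mathcal S:=A+\overline A\subseteq C(X)$ and bring the order structure of $\mathcal S$ into play. Second, and more concretely, for $|\lambda|<1$ the Blaschke map $b_\lambda(z):=(z-\lambda)/(1-\overline\lambda z)$ satisfies $b_\lambda\circ f\in A$ with $\|b_\lambda\circ f\|\le1$, and because $\alpha(1)=1$ a direct computation gives
\[
\theta_\alpha(b_\lambda\circ f)=-\lambda I+(1-|\lambda|^2)\sum_{k\ge1}\overline\lambda^{\,k-1}\,\theta_\alpha(f^k),
\]
an expression of norm at most $1$. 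Thus the entire Neumann-type expansion of $\theta$ in terms of the contractions $\theta_\alpha(f^k)$ is constrained. Differentiating this identity in $\lambda$ at $\lambda=0$ along suitable real directions and testing against the contact vectors of $\theta_\alpha$ should produce the missing lower bound on $\Re\langle\theta(\alpha f)v,u\rangle$, hence the inequality $p\le2$.

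The principal obstacle is that the Blaschke-family constraints are governed by the operator norm of $\theta_\alpha$, so they ``bite'' only at vectors where $\|\theta_\alpha\|$ is attained, whereas the extremal vectors $u,v$ above are dictated by $\theta$ itself; bridging this gap — equivalently, showing that $\|\theta_\alpha(f)\|$ must be close to $1$ whenever $\|\theta(f)\|$ is close to its maximum — is the delicate point, and it is conceivable that it cannot be carried out without the extra hypotheses present in the two special cases we are able to settle. A cleaner attack is available for the completely bounded version: if $\|\theta_\alpha\|_{\mathrm{cb}}\le1$, then, $\theta_\alpha$ being unital and completely contractive, it extends to a unital completely positive map on $C(X)$, so Stinespring's theorem gives $\theta_\alpha(f)=P\pi(f)|_{\CC^n}$ for some $*$-representation $\pi$ of $C(X)$ and some projection $P$ onto $\CC^n$. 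The problem then becomes: a unital homomorphism $\theta$ of $A$ whose symmetric part $\theta_\alpha$ is the compression of a $*$-representation must satisfy $\|\theta\|\le2$ — a question about semi-invariant subspaces and Sarason-type compressions that belongs to dilation theory, and which we expect to be where the constant $2$ (rather than $1+\sqrt2$) finally emerges.
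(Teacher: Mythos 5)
First, be aware that the statement you are addressing is stated in the paper as a \emph{conjecture}: the authors do not prove it, offering only the weaker bound $\|\theta\|\le 1+\sqrt2$ (Theorem~\ref{T:basic}) and two special cases (Theorems~\ref{T:CGL} and~\ref{T:vN}), so there is no proof in the paper to compare yours against. Your proposal, candidly, is not a proof either: it sets up a correct framework and stops exactly where the conjecture lives. The reduction is sound --- with $\theta(f)u=pv$, $\theta(f)^*v=pu$ and $M=2\theta_\alpha(f)$ one indeed gets
\[
\|Mu\|^2=p^2+s^2+2p\,\Re\langle\theta(\alpha(f))v,u\rangle\le 4,
\]
so the sign condition $\Re\langle\theta(\alpha(f))v,u\rangle\ge0$ would give $p\le2$. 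But you supply no mechanism for that sign condition. The Blaschke identity you write is correct (it uses $\theta_\alpha(1)=I$, hence the hypothesis $\alpha(1)=1$), but differentiating it at $\lambda=0$ yields, as in Lemma~\ref{L:xTx=0}, first-order information only at vectors norming $\theta_\alpha(f)$, whereas $u,v$ are norming vectors for $\theta(f)$; you name this mismatch yourself as the ``principal obstacle'' and offer no way across it. Without the sign information the identity is vacuous: the trivial bound $|\Re\langle\theta(\alpha(f))v,u\rangle|\le s$ only gives $(p-s)^2\le4$, i.e.\ $p\le2+s$, which says nothing. So the gap is genuine, acknowledged, and is in fact the entire content of the conjecture.

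That said, the two escape routes you sketch correspond exactly to the paper's two threads of partial progress, which is worth recording. The Blaschke-differentiation idea is the engine of Lemma~\ref{L:xTx=0} and Theorem~\ref{T:maeva}; there, however, it is applied to $\theta$ itself (so the norming vectors are the right ones), and the cross term becomes computable only because the extra hypothesis $\alpha(A)\subset\CC1$ reduces $\theta(\alpha(f))^*$ to a scalar multiple of $I$, whence $\langle(T+\lambda I)x,Tx\rangle=\|T\|^2$ once $\langle x,Tx\rangle=0$. Your dilation-theoretic reformulation of the completely bounded version --- extending the unital complete contraction $\theta_\alpha$ by Arveson and dilating by Stinespring --- is precisely implication (i)$\Rightarrow$(ii) of Theorem~\ref{T:equiv} in \S\ref{S:CB}, and even in that setting the paper obtains only $1+\sqrt2$ in general (Proposition~\ref{P:clever}), reaching $2$ only under further hypotheses such as $z^m\in\ker(\theta\circ\alpha)$. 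Your proposal is therefore a reasonable map of the territory, consistent with what is actually known, but it does not establish the statement.
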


Our interest in this conjecture is prompted  in part by the fact that
a positive solution, even in a very special case,
would lead to a proof of a celebrated conjecture of Crouzeix \cite{Cr04}. 
The latter conjecture states that,
if $T$ is an $n\times n$ matrix with numerical range $W(T)$, then,
for all polynomials $p$,
\begin{equation}\label{E:Crouzeix}
\|p(T)\|\le 2\max_{z\in W(T)}|p(z)|.
\end{equation}
In other words, $W(T)$ is a $2$-spectral set for $T$. 
For background on the Crouzeix conjecture and various partial results, 
we refer the reader to \cites{BCD06, BG20,  CGL18, Ch13, Cr04, Cr07, Cr16, CP17, GKL18, GO18}.

The link between the two conjectures is made precise by the following theorem.
We write $\DD$ for the open unit disk and $A(\DD)$ for the disk algebra,
namely the algebra of continuous functions on $\overline{\DD}$ that are holomorphic on~$\DD$.

\begin{theorem}\label{T:link}
If Conjecture~\ref{T:conj} holds when $A=A(\DD)$  and $n=n_0$,
then \eqref{E:Crouzeix} holds for all $T\in M_{n_0}(\CC)$ and all polynomials $p$.
\end{theorem}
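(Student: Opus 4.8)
The plan is to read the Crouzeix inequality off Conjecture~\ref{T:conj} after transplanting the numerical range to the disc. Fix $T\in M_{n_0}(\CC)$, a polynomial $p$, and $\varepsilon>0$; it suffices to prove that $\|p(T)\|\le 2\max_{W(T)}|p|+\varepsilon$. Since $W(T)$ is compact and convex, the first step is to choose a bounded convex open set $\Omega$ with smooth boundary such that $W(T)\subseteq\Omega$ and $\max_{\overline\Omega}|p|\le\max_{W(T)}|p|+\varepsilon/2$; in particular $\sigma(T)\subseteq W(T)\subseteq\Omega$. Let $\varphi:\DD\to\Omega$ be a conformal bijection. By Carath\'eodory's theorem $\varphi$ extends to a homeomorphism of $\overline\DD$ onto $\overline\Omega$, and (Kellogg--Warschawski) $\varphi\in C^\infty(\overline\DD)$ with $\varphi'$ nonvanishing on $\TT$; write $\psi:=\varphi^{-1}$.

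Next I would define $\theta:A(\DD)\to M_{n_0}(\CC)$ by $\theta(f):=(f\circ\psi)(T)$ via the Riesz--Dunford functional calculus, which is legitimate because $f\circ\psi$ is holomorphic on the open set $\Omega\supseteq\sigma(T)$. Then $\theta$ is a unital homomorphism, and it is continuous: choosing a contour $\Gamma$ in $\Omega$ around $\sigma(T)$, one has $\|\theta(f)\|=\bigl\|\tfrac{1}{2\pi i}\int_\Gamma(f\circ\psi)(\zeta)(\zeta-T)^{-1}\,d\zeta\bigr\|\le C_\Gamma\sup_{\overline\Omega}|f\circ\psi|=C_\Gamma\|f\|$, where $C_\Gamma<\infty$ and we used that $\psi$ carries $\overline\Omega$ onto $\overline\DD$. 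Applying $\theta$ to $f=p\circ\varphi\in A(\DD)$ gives $\theta(f)=p(T)$ (since $\varphi\circ\psi$ is the identity on $\Omega\supseteq\sigma(T)$) and $\|f\|=\max_{\overline\Omega}|p|$, so that
\[
\|p(T)\|\le\|\theta\|\,\max_{\overline\Omega}|p|\le\|\theta\|\Bigl(\max_{W(T)}|p|+\varepsilon/2\Bigr).
\]
Thus everything reduces to the bound $\|\theta\|\le 2$.

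The heart of the argument, and the step I expect to be the main obstacle, is the construction of a \emph{unital} antilinear contraction $\alpha:A(\DD)\to A(\DD)$ with $\|\theta_\alpha\|\le1$. Following Crouzeix and Palencia \cite{CP17}, the natural candidate is, for polynomials $f$,
\[
\alpha(f)(w):=\frac{1}{2\pi i}\int_{\TT}\overline{f(\xi)}\,\frac{\varphi'(\xi)}{\varphi(\xi)-\varphi(w)}\,d\xi\qquad(w\in\DD),
\]
that is, the conformal transplant to $\DD$ of the Cauchy transform $F\mapsto\mathcal{C}_{\partial\Omega}\!\bigl(\overline F|_{\partial\Omega}\bigr)$ applied to $F:=f\circ\psi\in A(\overline\Omega):=\hol(\Omega)\cap C(\overline\Omega)$. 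Smoothness of $\varphi$ up to $\TT$ makes $\alpha(f)\in A(\DD)$ (a Cauchy-type integral with smooth density), $\alpha$ is manifestly antilinear, and $\alpha(1)=1$ by the Cauchy integral formula, so $\alpha$ is unital --- which is precisely the feature absent from the sharp example behind Theorem~\ref{T:basic}. It then remains to verify the two estimates $\|\alpha(f)\|\le\|f\|$ and, writing $g:=\alpha(f)\circ\psi$, $\;\|\theta(f)+\theta(\alpha(f))^*\|=\|F(T)+g(T)^*\|\le 2\|f\|$; both are contained in \cite{CP17}. The first uses convexity of $\Omega$ decisively (it comes down, ultimately, to the coefficient bound $|a_2|\le1$ for a univalent map onto a convex domain) and permits extending $\alpha$ by density from polynomials to a unital antilinear contraction on all of $A(\DD)$; the second is the Crouzeix--Palencia key lemma, and says exactly that $\|\theta_\alpha\|\le1$.

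With this triple $(A(\DD),\theta,\alpha)$ in hand, Conjecture~\ref{T:conj} --- assumed to hold for $A=A(\DD)$ and $n=n_0$ --- yields $\|\theta\|\le 2$; combined with the displayed inequality above and letting $\varepsilon\to0$, this is exactly \eqref{E:Crouzeix}. Essentially all the difficulty lies in the third step, namely the two inequalities for $\alpha$; but these are precisely the estimates already established by Crouzeix and Palencia (indeed, Theorem~\ref{T:basic} together with this construction of $\alpha$ reproves their $(1+\sqrt2)$-bound), so the only genuinely new ingredient needed for Theorem~\ref{T:link} is the observation that this $\alpha$ is unital, which is what licenses the conjectural passage from $1+\sqrt2$ to $2$.
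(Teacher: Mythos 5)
Your proposal is correct and follows essentially the same route as the paper: transplant $T$'s functional calculus to the disc via a Riemann map of a smooth convex neighbourhood $\Omega\supset W(T)$, take $\alpha$ to be the conformally transplanted Cauchy transform $h\mapsto\cC_\Omega\overline{h}$ of Crouzeix--Palencia, verify that it is a unital antilinear contraction with $\|\theta_\alpha\|\le1$, and invoke the conjecture before shrinking $\Omega$ to $W(T)$. The extra details you supply (boundary smoothness of the Riemann map, continuity of $\theta$ via a contour estimate, the $\varepsilon$-bookkeeping) are fine but not needed beyond what the paper already records.
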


This theorem will be proved in \S\ref{S:link}.
We note in passing that a similar argument, but using Theorem~\ref{T:basic}
instead of Conjecture~\ref{T:conj}, leads to the conclusion that \eqref{E:Crouzeix}
holds with $(1+\sqrt{2})$ in place of the constant~$2$.
This was already known: it is a result of Crouzeix and Palencia \cite{CP17}.
Indeed, the Crouzeix--Palencia theorem was the main inspiration for the present article.

We establish two partial results in support of Conjecture~\ref{T:conj}.
In the first of these, we add the assumption that $\alpha$ maps $A$ to 
multiples of the identity.
This  is an abstraction of an idea of Caldwell, Greenbaum and Li \cite[\S6]{CGL18},
and, as they showed, it implies the Crouzeix conjecture for the disk, 
a result  originally due to Okubo and Ando \cite{OA75}.

\begin{theorem}\label{T:CGL}
Let $A$ be a uniform algebra, let $\theta:A\to M_n(\CC)$ be a continuous homomorphism,
and let $\alpha:A\to A$ be a  unital antilinear  contraction 
such that $\alpha(A)\subset\CC1$.  
If $\theta_\alpha$ defined by \eqref{E:thetaalpha} satisfies $\|\theta_\alpha\|\le1$, then
$\|\theta\|\le2$.
\end{theorem}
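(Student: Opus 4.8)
The plan is to strip the hypothesis down to a scalar statement, reduce to the disk algebra, and there recognise (a form of) the classical Okubo--Ando theorem.

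First, since $\alpha(A)\subseteq\CC1$ and $\alpha(1)=1$, write $\alpha(f)=\lambda(f)1$ with $\lambda\colon A\to\CC$ antilinear and $\lambda(1)=1$; as $|\lambda(f)|=\|\alpha(f)\|\le\|f\|$ we get $\|\lambda\|\le1$, so $\psi(f):=\overline{\lambda(f)}$ defines a state on $A$ (a unital linear functional of norm one). Then $\theta(\alpha(f))^{*}=\overline{\lambda(f)}\,\theta(1)^{*}=\psi(f)I$, so $\theta_\alpha(f)=\tfrac12\bigl(\theta(f)+\psi(f)I\bigr)$ and the hypothesis $\|\theta_\alpha\|\le1$ becomes
\[
\|\theta(f)+\psi(f)I\|\le 2\|f\|\qquad(f\in A);
\]
equivalently, $\sigma:=\theta_\alpha$ is a \emph{unital contraction} $A\to M_n(\CC)$ and $\theta(f)=2\sigma(f)-\psi(f)I$ for every $f$.

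Second, I would reduce to the case $A=A(\DD)$. Fix $f\in A$ with $\|f\|<1$; it suffices to bound $\|\theta(f)\|$. The range of $f$ is a compact subset of $\DD$, so $h\circ f\in A$ for every $h\in A(\DD)$, the map $\theta_f\colon h\mapsto\theta(h\circ f)$ is a continuous unital homomorphism $A(\DD)\to M_n(\CC)$ equal to the Riesz--Dunford calculus $h\mapsto h(\theta(f))$ (the spectrum of $\theta(f)$ lies in $\DD$), and $\mu_f(h):=\psi(h\circ f)$ is a state on $A(\DD)$, hence integration against a probability measure $\nu$ on $\TT$. The hypothesis yields $\|\theta_f(h)+\mu_f(h)I\|=\|\theta(h\circ f)+\psi(h\circ f)I\|\le2\|h\circ f\|_\infty\le2\|h\|_\infty$, and $\|\theta(f)\|=\|\theta_f(z)\|$. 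So the general statement follows once we know: if $T\in M_n(\CC)$ has spectrum in $\DD$ and $\|h(T)+(\int_\TT h\,d\nu)\,I\|\le2\|h\|_\infty$ for all $h\in A(\DD)$ and some probability measure $\nu$ on $\TT$, then $\|h(T)\|\le2\|h\|_\infty$.

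Third, I would treat this disk case by extracting a dilation. The aim is to show the hypothesis forces $T$ to be of class $C_2$, that is $w(T)\le1$: testing the inequality on the Szeg\H{o}-type kernels $(1-z\zeta)^{-1}$ and exploiting the positivity carried by $\nu$ should produce the Herglotz statement that $(I-zT)^{-1}$ has nonnegative real part on $\DD$. Once $w(T)\le1$, Berger's theorem gives a unitary $U$ on some $\cK\supseteq\CC^n$ with $T^m=2\,P_{\CC^n}U^m|_{\CC^n}$ for $m\ge1$, hence $h(T)=2\,P_{\CC^n}h(U)|_{\CC^n}-\hat h(0)\,I$ for $h\in A(\DD)$; the Okubo--Ando estimate—using the positivity of the associated operator-valued measure and the fact that the defect term $\hat h(0)I$ is the \emph{mean} of $h$, not an arbitrary scalar—then improves the trivial bound $3\|h\|_\infty$ to $2\|h\|_\infty$.

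The hard part is the third step, in two respects. First, passing from a purely norm-theoretic hypothesis to an \emph{operator inequality} such as $\Re(I-zT)^{-1}\ge0$ is delicate: a bound on $\|\theta(f)+\psi(f)I\|$ does not by itself yield positivity, so this must genuinely use the interaction between $\theta$ being multiplicative and $\psi$ being a state. Second, one must carry out the sharp Okubo--Ando computation, since applying the triangle inequality directly to $\theta(f)=2\sigma(f)-\psi(f)I$ only gives the constant $3$. It may well be cleaner to avoid the passage through $A(\DD)$ altogether and instead use the representing measure of $\psi$ to build the $C_2$-dilation of $\theta$ abstractly, but in either approach the construction of that dilation and the ensuing sharp estimate are where the real work lies.
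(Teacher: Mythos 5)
Your first two steps are sound (and step 1 matches how the paper itself implicitly reduces Theorem~\ref{T:CGL} to a statement about a linear functional $\psi$), but the third step is a genuine gap, and it is where the entire proof lives. You need: if $\sigma(T)\subset\DD$ and $\|h(T)+(\int_\TT h\,d\nu)I\|\le 2\|h\|_\infty$ for all $h\in A(\DD)$, then $W(T)\subset\overline{\DD}$. The kernel argument you sketch goes the wrong way. What the contractivity of the unital map $h\mapsto\frac12\bigl(h(T)+(\int_\TT h\,d\nu)I\bigr)$ actually gives (via the disk argument of Lemma~\ref{L:proj}) is that $\Re h\ge0$ implies the image has positive real part; applied to the Herglotz kernels $h_w(\zeta)=(1+\overline{w}\zeta)/(1-\overline{w}\zeta)$ this yields
\[
\Re\bigl[(I+\overline{w}T)(I-\overline{w}T)^{-1}\bigr]\;\ge\;-P[\nu](w)\,I \quad(w\in\DD),
\]
where $P[\nu]$ is the Poisson integral of $\nu$. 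The positivity carried by $\nu$ therefore \emph{adds} to the real part rather than being subtractable. Since $W(T)\subset\overline{\DD}$ is equivalent to the left-hand side being $\ge -I$ for all $w$, you would need $P[\nu]\le1$ everywhere, which (as $P[\nu](0)=1$) forces $\nu$ to be normalized Lebesgue measure --- i.e.\ essentially the case $\alpha(f)=\overline{f(0)}1$ already noted at the end of \S\ref{S:CB}. For a general representing measure (say $\nu=\delta_1$) the bound blows up near the support of $\nu$ and gives nothing, and you offer no replacement argument; you flag this step as ``delicate'' but that is precisely the point at which the proof is missing.

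There is also structural evidence that this route cannot be completed as stated: if your step 3 were provable, then $W(\theta(z))\subset\overline{\DD}$ would give $\|\theta\|_{cb}\le2$ by Theorem~\ref{T:equiv} ((iv)$\Rightarrow$(v)), so you would have settled Conjecture~\ref{C:conjCB} for the disk algebra with scalar-valued $\alpha$ --- exactly the case the paper explicitly says it does not know how to handle. The paper's actual proof avoids dilations and numerical ranges entirely. It proves the more general Theorem~\ref{T:maeva}: for any continuous homomorphism with $\|\theta\|>1$ and any $\beta\in A^*$, one has $\|\theta+\beta I\|\ge\|\theta\|$. This is done by a compactness argument (pass to a limit matrix $T$ of a norm-maximizing sequence $\theta(f_k)$) combined with the Crouzeix--Gilfeather--Holbrook variational lemma (Lemma~\ref{L:xTx=0}), which forces $\langle x,Tx\rangle=0$ at a maximizing vector and hence $\|T\|^2=\langle(T+\lambda I)x,Tx\rangle\le\|T+\lambda I\|\,\|T\|$. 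Applying this with $\beta=\psi$ and $\|\theta+\psi I\|=2\|\theta_\alpha\|\le2$ gives $\|\theta\|\le2$ immediately. Note that the final Okubo--Ando sharpening you worry about is not the real obstacle (it is Theorem~\ref{T:OA}, which the paper in fact rederives from Theorem~\ref{T:maeva}); the unproved passage to $W(T)\subset\overline{\DD}$ is.
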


In the second result, we suppose that the uniform algebra $A$ is self-adjoint, in other words,
that $A$ is a commutative C*-algebra.
The conclusion here is even stronger than the one that we seek.

\begin{theorem}\label{T:vN}
Let $A$ be a commutative C*-algebra,
let $\theta:A\to M_n(\CC)$ be a continuous homomorphism,
and let $\alpha:A\to A$ be a unital antilinear contraction.
If $\theta_\alpha$, defined by \eqref{E:thetaalpha}, satisfies $\|\theta_\alpha\|\le1$, then
$\theta$ is a $*$-homomorphism, and in particular $\|\theta\|=1$.
\end{theorem}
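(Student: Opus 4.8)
The plan is to exploit the extra rigidity available when $A = C(X)$ by reducing to a single point of the Gelfand spectrum and then invoking a scalar argument. Since $\theta$ is a homomorphism into $M_n(\CC)$, its image is a commutative subalgebra, so after conjugating by a suitable invertible matrix we may assume $\theta$ is upper triangular; the diagonal entries are then multiplicative linear functionals $\lambda_1,\dots,\lambda_n$ on $A$, i.e.\ evaluations at points $x_1,\dots,x_n \in X$. The goal is to show $\theta$ is actually \emph{diagonal} with respect to some orthonormal basis, which for a commuting family of matrices is equivalent to $\theta(f)$ being normal for every $f$, hence to $\theta$ being a $*$-homomorphism.

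First I would record the key estimate coming from the hypothesis. Because $\alpha$ is a unital antilinear contraction on the C*-algebra $A=C(X)$, for any $f \in A$ the element $g := \alpha(f)$ satisfies $\|g\| \le \|f\|$, and the map $f \mapsto \overline{g}$ is a \emph{linear} contraction fixing $1$. Writing $u := \frac{1}{2}(f + \overline{\alpha(f)})$, the hypothesis $\|\theta_\alpha\|\le 1$ reads $\|\theta(u)\| \le 1$ whenever $\|u\|_\infty$ is controlled; more usefully, for real-valued $f$ with $\|f\|\le 1$ one can arrange (taking $f$ fixed by the involution, or averaging) a function $h \in A$ with $-1 \le h \le 1$ such that $\theta(h)$ is the \emph{self-adjoint} part appearing in $\theta_\alpha$, giving $\|\theta(h)\| \le 1$. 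The point I would push is this: for every real $h \in C(X)$ with $\|h\|_\infty \le 1$ we must have $\|\theta(h)\| \le 1$, i.e.\ the restriction of $\theta$ to the real subalgebra $C_\RR(X)$ is a contraction. Since $\theta|_{C_\RR(X)}$ is a unital homomorphism of the real algebra that is also contractive, it is positive (a contractive unital linear map between C*-algebras is positive, and here it is multiplicative), hence $\theta$ is a positive unital homomorphism on $C(X)$, and a positive unital homomorphism into $M_n(\CC)$ is automatically a $*$-homomorphism. Then $\|\theta\|=1$ is immediate since $*$-homomorphisms of C*-algebras are contractive.

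More carefully, to extract the inequality $\|\theta(h)\| \le 1$ for all real $h$ with $\|h\|\le 1$: given such an $h$, set $f := h$ (real, so $\|f\|\le 1$); since $\alpha$ is contractive, $\|\alpha(h)\|\le 1$, and I would like $\overline{\alpha(h)}$ to equal $h$ — but in general it need not, which is exactly the obstacle. The fix is to observe that $\theta_\alpha(h) = \frac12(\theta(h) + \theta(\alpha(h))^*)$ and $\theta_\alpha(ih) = \frac{i}{2}(\theta(h) - \theta(\alpha(h))^*)$ using antilinearity of $\alpha$ (so $\alpha(ih) = -i\alpha(h)$) and taking adjoints; combining, $\theta(h) = \theta_\alpha(h) - i\,\theta_\alpha(ih)$, whence $\|\theta(h)\| \le \|h\| + \|ih\| = 2\|h\|$ — this only recovers the factor $2$. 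To get the sharp bound $1$ I would instead iterate or use the multiplicativity: for $h$ real with $\|h\|\le 1$ and any integer $k$, $\theta(h^k) = \theta(h)^k$, and $\|h^k\|_\infty \le 1$, so $\|\theta(h)^k\| \le 2$ for all $k$; letting $k\to\infty$ forces the spectral radius of $\theta(h)$ to be $\le 1$, and then a separate argument (using that $\theta(h)$ lies in a commutative algebra and applying the same reasoning to $p(h)$ for polynomials $p$, via the von~Neumann-type inequality $\|p(\theta(h))\| = \|\theta(p\circ h)\| \le 2\|p\|_{[-1,1]}$) upgrades this to $\theta(h)$ being similar to a self-adjoint matrix with spectrum in $[-1,1]$; a standard perturbation/averaging argument then removes the similarity. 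I expect \textbf{this last upgrade — from "bounded by $2$" to "genuinely self-adjoint" — to be the main obstacle}, and the cleanest route is likely to apply Theorem~\ref{T:basic}'s circle of ideas, or to note directly that a unital homomorphism $\theta\colon C(X)\to M_n(\CC)$ with $\|\theta|_{C_\RR(X)}\| < \infty$ and commuting range, once shown to have all $\theta(h)$ ($h$ real) \emph{diagonalizable with real spectrum}, is simultaneously diagonalizable and hence a $*$-homomorphism. I would present the argument in the order: (1) reduce to the scalar/triangular picture; (2) prove $\theta(h)$ has real spectrum for real $h$ via the spectral radius estimate; (3) prove $\theta(h)$ is diagonalizable via a polynomial functional-calculus estimate; (4) conclude simultaneous diagonalizability and the $*$-property.
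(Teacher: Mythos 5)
Your plan has a genuine gap at precisely the point you flag as ``the main obstacle,'' and the route you outline cannot close it. Steps (2)--(3) extract from the hypothesis only a norm bound on $\theta$ (via Theorem~\ref{T:basic}, $\|\theta\|\le 1+\sqrt2$, not $2$), from which you deduce that each $\theta(h)$, $h$ real, has spectrum in $[-1,1]$ (this much is automatic from $\sigma(\theta(h))\subset\sigma(h)$ and needs no estimate) and is diagonalizable (via the functional-calculus bound and Jordan blocks). But a commuting family of diagonalizable matrices is simultaneously diagonalizable only by \emph{some} invertible $S$, not by a unitary; what you obtain is that $\theta(f)=S\,\diag(f(x_1),\dots,f(x_n))\,S^{-1}$, i.e.\ that $\theta$ is \emph{similar} to a $*$-homomorphism, which yields only $\|\theta\|\le\|S\|\|S^{-1}\|$. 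No averaging or perturbation removes the similarity: every intermediate conclusion you derive (real spectrum, diagonalizability, a bounded real functional calculus) is satisfied by such a $\theta$ for an arbitrary non-unitary $S$, and such a $\theta$ is not a $*$-homomorphism. So those conclusions cannot imply the theorem; the hypothesis $\|\theta_\alpha\|\le1$ must be used structurally --- through the interplay of $\theta(f)$ with $\theta(\alpha(f))^*$ --- and not merely as a source of a norm bound on $\theta$. Your opening claim that $\|\theta(h)\|\le1$ for all real $h$ with $\|h\|_\infty\le1$ would indeed finish the proof (contractive unital implies positive, and a positive unital homomorphism into $M_n(\CC)$ is a $*$-homomorphism), but you concede you cannot establish it, and the fallback only recovers a constant greater than $1$.

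For comparison, the paper supplies exactly the missing orthogonality step. Lemma~\ref{L:proj} shows $W(\theta_\alpha(f))\subset\conv(\sigma(f))$, so for a self-adjoint unitary $u=2p-1$ the matrix $\theta_\alpha(u)$ is itself self-adjoint; writing $\theta(u)$ in upper-triangular form with off-diagonal block $X$, and playing the commutation of $\theta(u)$ with $\theta(\alpha(u))$ against the norm bound on the self-adjoint matrix $2\theta_\alpha(u)$ (a $2\times2$ submatrix computation), one forces $X=0$, i.e.\ the spectral projections $\theta(p)$ are genuinely self-adjoint (Theorem~\ref{T:proj}). The contraction property $\|\theta\|\le1$ is then obtained by approximating elements by simple functions built from orthogonal projections, passing to the bidual $A''$ to guarantee a supply of projections. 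If you wish to salvage your outline, the step to supply is a proof that the spectral idempotents of $\theta(h)$ are orthogonal projections; that is where the hypothesis on $\theta_\alpha$ must genuinely enter.
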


Theorems~\ref{T:CGL} and \ref{T:vN} will be proved and further discussed in \S\ref{S:CGL} and \S\ref{S:vN} respectively.
Finally, in \S\ref{S:CB}, we shall recast the discussion in the setting of completely bounded maps and operator algebras, and highlight the connection with dilation theory.


\section{Proof of Theorem~\ref{T:basic}}\label{S:basic}

The proof that follows is based on an argument from \cite{RS18}.

\begin{proof}[Proof of Theorem~\ref{T:basic}]
For $f\in A$, we have
\begin{align*}
\|\theta(f)\|^4
&=\|\theta(f)\theta(f)^*\theta(f)\theta(f)^*\|\\
&=\|\theta(f)\bigl(\theta(f)+\theta(\alpha(f))^*\bigr)^*\theta(f)\theta(f)^*-\theta(f)\theta(\alpha(f))\theta(f)\theta(f)^*\|\\
&\le\|\theta(f)\|\|2\theta_\alpha(f)^*\|\|\theta(f)\|\|\theta(f)^*\|+\|\theta(f\alpha(f)f)\|\|\theta(f)\|\\
&\le 2\|\theta\|^3\|f\|^4+\|\theta\|^2\|f\alpha(f)f\|\|f\|\\
&\le 2\|\theta\|^3\|f\|^4+\|\theta\|^2\|f\|^4.
\end{align*}
(Here the second inequality used $\|\theta_\alpha\|\le1$, and the last one used the fact that $\alpha$ is a contraction.)
Therefore $\|\theta\|^4\le 2\|\theta\|^3+\|\theta\|^2$, whence $\|\theta\|\le 1+\sqrt2$.

The sharpness of the constant $(1+\sqrt{2})$ follows immediately from the example exhibited in
\cite[\S3]{RS18}.
\end{proof}

\begin{remarks}
(i) The proof of Theorem~\ref{T:basic} makes it clear that the result remains true if $A$ is an arbitrary 
Banach algebra and $M_n(\CC)$ is replaced by an arbitrary C*-algebra. 

(ii) As remarked in the introduction, in the example showing sharpness, we have $\alpha(1)=-1$.
\end{remarks}


\section{Proof of Theorem~\ref{T:link}}\label{S:link}

The proof that follows is based on arguments from \cite{CP17}.

\begin{proof}[Proof of Theorem~\ref{T:link}]
Suppose  that Conjecture~\ref{T:conj} holds when $A=A(\DD)$ and $n=n_0$.
Let $T\in M_{n_0}(\CC)$. We shall prove that \eqref{E:Crouzeix} holds for all polynomials $p$.

Let $\Omega$ be a bounded convex open neighbourhood of $W(T)$ with smooth boundary.
Once again, we denote by $A(\Omega)$ the 
uniform algebra of continuous functions on $\overline{\Omega}$
that are holomorphic on $\Omega$.

As is shown in \cite[\S2]{CP17},
the containment $W(T)\subset\Omega$ 
is reflected in the fact that the operator-valued measure on $\partial\Omega$
used in defining $h(T)$ for $h\in A(\Omega)$, namely
\[
\frac{1}{2\pi i}(\zeta I-T)^{-1}\,d\zeta,
\]
has positive real part.
This quickly  leads to the estimate
\begin{equation}\label{E:f+g*}
\|h(T)+((\cC_\Omega\overline{h})(T))^*\|\le 2\sup_{\partial\Omega}|h|
\quad(h\in A(\Omega)),
\end{equation}
where $\cC_\Omega\overline{h}$ denotes the Cauchy transform of $\overline{h}$ relative to $\Omega$,
defined by
\[
(\cC_\Omega\overline{h})(z):=\frac{1}{2\pi i}\int_{\partial\Omega}\frac{\overline{h(\zeta)}}{\zeta-z}\,d\zeta
\quad(z\in\Omega).
\]
We remark also that, since $\Omega$ is convex,
the map $h\mapsto \cC_\Omega\overline{h}$ is a  contraction of $A(\Omega)$ into itself 
(see \cite[Lemma~2.1]{CP17}).

By the Riemann mapping theorem, there exists a conformal mapping $\phi$ of $\Omega$ onto the unit disk $\DD$,
and by Carath\'eodory's theorem, $\phi$ extends to a homeomorphism of $\overline{\Omega}$ onto $\overline{\DD}$.
For $f\in A(\DD)$, define
\begin{align*}
\theta(f)&:=(f\circ\phi)(T),\\
\alpha(f)&:=\cC_\Omega (\overline{f\circ\phi})\circ\phi^{-1}.
\end{align*}
Then $\theta:A(\DD)\to M_{n_0}(\CC)$ is a continuous homomorphism, 
and $\alpha:A(\DD)\to A(\DD)$ is a unital, antilinear contraction. Further, by \eqref{E:f+g*}, for all $f\in A(\DD)$,
\[
\|\theta(f)+(\theta(\alpha(f))^*\|=\|(f\circ\phi)(T)+\cC_{\Omega}(\overline{f\circ\phi})(T)^*\|\le 2\sup_{\partial\Omega}|f\circ\phi|=2\|f\|_{A(\DD)},
\]
so $\|\theta_\alpha\|\le1$.
Using the supposed truth of Conjecture~\ref{T:conj}, we deduce that $\|\theta\|\le 2$. In particular, if $p$ is a polynomial, then
\[
\|p(T)\|=\|\theta(p\circ\phi^{-1})\|\le 2\|p\circ\phi^{-1}\|_{A(\DD)}=2\sup_{\Omega}|p|.
\] 
Finally, by taking a sequence $\Omega_n$ of smoothly bounded convex neighbourhoods of $W(T)$ shrinking to $W(T)$, 
we deduce that
\[
\|p(T)\|\le 2\sup_{W(T)}|p|,
\] 
as required.
\end{proof}


\section{Proof of Theorem~\ref{T:CGL}}\label{S:CGL}

Theorem~\ref{T:CGL} is a special case of the following, more general result.

\begin{theorem}\label{T:maeva}
Let $A$ be a uniform algebra and let $\theta:A\to M_n(\CC)$ be a continuous homomorphism.
If $\|\theta\|>1$, then, for all $\beta$ in the dual of $A$,
\[
\|\theta+\beta I\|\ge \|\theta\|.
\]
\end{theorem}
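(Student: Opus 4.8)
\textbf{Plan for proving Theorem~\ref{T:maeva}.}
The idea is to exploit the multiplicativity of $\theta$ by composing it with itself many times, which amplifies any deviation of $\|\theta\|$ from $1$ while the additive perturbation $\beta I$ grows only linearly. Concretely, I would fix $f \in A$ with $\|f\|\le 1$ and consider the powers $f, f^2, f^3, \dots$. Since $\theta$ is a homomorphism, $\theta(f^k) = \theta(f)^k$, so
\[
(\theta + \beta I)(f^k) = \theta(f)^k + \beta(f^k) I.
\]
Taking norms and using $|\beta(f^k)| \le \|\beta\| \|f\|^k \le \|\beta\|$, we get $\|(\theta+\beta I)(f^k)\| \ge \|\theta(f)^k\| - \|\beta\|$. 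Dividing by $\|f^k\| \le 1$ gives $\|\theta + \beta I\| \ge \|\theta(f)^k\| - \|\beta\|$ for every $k$. The point is now to choose $f$ so that $\|\theta(f)^k\|$ is large; ideally we would like $\|\theta(f)^k\| \approx \|\theta(f)\|^k$, but that requires some care since the matrix $\theta(f)$ need not be normal.

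The key step is a spectral radius argument. Given $\varepsilon > 0$, pick $f\in A$ with $\|f\|\le1$ and $\|\theta(f)\| \ge \|\theta\| - \varepsilon$. The eigenvalues of $\theta(f)$ all lie in the spectrum of $\theta(f)$, hence have modulus at most $\|\theta\|\,\|f\| \le \|\theta\|$ — but that bound goes the wrong way. Instead, I would use that $\|\theta(f)^k\|^{1/k} \to r(\theta(f))$, the spectral radius, which may be much smaller than $\|\theta(f)\|$. To get around this, the right move is to work not with $\|\theta(f)^k\|$ directly but to choose $f$ cleverly: if $\|\theta\| > 1$, then there is some $f_0$ with $\|f_0\|\le1$ and $\|\theta(f_0)\| = c > 1$; replacing $f_0$ by a suitable scalar multiple or by iterating, one should be able to produce elements whose images under $\theta$ have norm growing geometrically. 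Actually the cleanest approach: since $A$ is a uniform algebra, $\|f^k\| = \|f\|^k$ exactly (uniform algebras are isometric function algebras), so there is no loss in the denominator; and by the sharpness side of the argument in \cite{RS18} or by a direct construction one can find $f$ with $\|f\| = 1$ for which $\|\theta(f^k)\|$ stays bounded below by a fixed positive power — the obstruction is really about the interplay between $\|\theta\|$ and spectral radius.

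The cleanest rigorous route, and the one I expect the authors take, is: suppose for contradiction that $\|\theta + \beta I\| = \|\theta\| - \delta$ with $\delta > 0$. For $f$ with $\|f\| \le 1$, iterate to get
\[
\|\theta(f)\|^k - \text{(something)} \quad\text{versus}\quad (\|\theta\|-\delta)^k,
\]
and more precisely bound $\|\theta(f)^k\|$ from below using that $\theta(f^k) = (\theta+\beta I)(f^k) - \beta(f^k)I$ and recursively peeling off one factor at a time — writing $\theta(f)^{k} = \theta(f)^{k-1}\bigl((\theta+\beta I)(f) - \beta(f)I\bigr)$, so $\|\theta(f)^k\| \le \|\theta(f)^{k-1}\|\bigl(\|\theta+\beta I\| + \|\beta\|\bigr)$, which is again the wrong direction. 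The genuinely correct identity runs the other way: $(\theta+\beta I)(f)^k$ expanded by the binomial theorem (the two terms $\theta(f)$ and $\beta(f)I$ commute!) equals $\sum_{j} \binom{k}{j} \beta(f)^{k-j}\theta(f)^j$, so
\[
\theta(f)^k = (\theta+\beta I)(f)^k - \sum_{j<k}\binom{k}{j}\beta(f)^{k-j}\theta(f)^j.
\]
Taking norms, $\|\theta(f)^k\| \le \|\theta+\beta I\|^k + \sum_{j<k}\binom{k}{j}\|\beta\|^{k-j}\|\theta(f)^j\|$, and since by induction $\|\theta(f)^j\| \le (\|\theta+\beta I\| + \|\beta\|)^j$ this gives $\|\theta(f)^k\| \le (\|\theta+\beta I\| + 2\|\beta\|)^k$ — still not sharp. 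The real trick must be to instead \emph{average over the circle}: replace $\beta$ by integrating $e^{-i t}\beta$ against $\theta(e^{it}f)$, i.e. use $\frac{1}{2\pi}\int_0^{2\pi} e^{-ikt}(\theta+\beta I)(e^{it}f)^k\,dt = \theta(f)^k$ when $k\ge1$, because the $\beta$-contribution, being a polynomial in $e^{it}$ of the wrong degrees, integrates to zero against $e^{-ikt}$ except... no, $\beta(e^{it}f)^{k-j} = e^{i(k-j)t}\beta(f)^{k-j}$ and the cross terms carry $e^{ijt}$ from $\theta$, total $e^{ikt}$, so after multiplying by $e^{-ikt}$ \emph{every} term survives.

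\textbf{The correct mechanism.} Multiply instead by $e^{-i t}$ to the first power and integrate: $\frac{1}{2\pi}\int_0^{2\pi}e^{-it}(\theta+\beta I)(e^{it}f)\,dt$. Here $(\theta+\beta I)(e^{it}f) = e^{it}\theta(f) + e^{it}\beta(f)I$, so this just returns $\theta(f)+\beta(f)I$ — no help. The actual resolution is surely more elementary: \textbf{I would prove the contrapositive via a fixed-point/rescaling argument}. Set $c = \|\theta\| > 1$ and suppose $\|\theta + \beta I\| < c$. The map $f \mapsto \theta(f) + \beta(f)I$ is a unital homomorphism-plus-rank-perturbation; its restriction still satisfies $(\theta+\beta I)(1) = I + \beta(1)I = (1+\beta(1))I$, which need not be $I$, so $\theta + \beta I$ is not unital and comparing operator norms on the unit ball, one extracts $\|\theta(f)\| \le \|(\theta+\beta I)(f)\| + |\beta(f)| \le (\|\theta+\beta I\| + \|\beta\|)\|f\|$, giving only $\|\theta\| \le \|\theta+\beta I\| + \|\beta\|$. \textbf{The main obstacle} is precisely to upgrade this trivial triangle-inequality bound to the claimed $\|\theta+\beta I\| \ge \|\theta\|$ with \emph{no} dependence on $\|\beta\|$; I expect this is achieved by the power trick $\theta(f)^k = \theta(f^k)$ combined with taking $k$-th roots so that the polynomially-growing error in $\|\beta\|$ is killed in the limit: from $\|\theta(f)^k\| = \|\theta(f^k)\| \le \|(\theta+\beta I)(f^k)\| + |\beta(f^k)| \le \|\theta+\beta I\|\,\|f^k\| + \|\beta\|\,\|f^k\|$ we get the useless bound again, so one must instead \emph{first} pick $f$ with $\|\theta(f)^k\|$ close to $\|\theta(f)\|^k$ — possible only if $\theta(f)$ is close to normal, which one arranges by a careful choice exploiting that $\|\theta\|$ is attained or nearly attained. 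I would spend the bulk of the effort on that selection step, after which the $k$-th root limit $\lim_k(\|\theta+\beta I\|\,\|f\|^k + \|\beta\|\,\|f\|^k)^{1/k}$ collapses — wait, $\|f^k\| = \|f\|^k \le 1$ so there is nothing to take a root of. The honest summary: the proof hinges on showing $\|\theta(f^k)\|^{1/k}$ can be made close to $\|\theta\|$ by choosing $f$ appropriately (a spectral/von Neumann-type input about uniform algebras), and \emph{that} is the step I anticipate as the crux; everything downstream is the triangle inequality plus a limit.
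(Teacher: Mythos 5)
Your proposal is not a proof: it is a sequence of candidate strategies, each of which you yourself correctly identify as failing, ending with an unproved ``crux'' --- and that crux is in fact false. You propose to choose $f$ with $\|f\|\le 1$ so that $\|\theta(f^k)\|^{1/k}$ is close to $\|\theta\|$. This is impossible: since $\theta$ is a unital homomorphism, $\sigma(\theta(f))\subset\sigma(f)\subset\overline{\DD}$ whenever $\|f\|\le 1$, so $\|\theta(f^k)\|^{1/k}=\|\theta(f)^k\|^{1/k}\to r(\theta(f))\le 1<\|\theta\|$. The power trick can therefore never recover $\|\theta\|$ in the limit, and no ``careful selection'' of $f$ can fix this. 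You also never explain where the hypothesis $\|\theta\|>1$ should enter, which is a sign that the intended mechanism has not been located.

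The paper's actual argument is of a completely different nature. By compactness of the unit ball of $M_n(\CC)$, one extracts a limit matrix $T=\lim\theta(f_k)$ with $\|T\|=\|\theta\|$ and a scalar $\lambda=\lim\beta(f_k)$ with $\|T+\lambda I\|\le\|\theta+\beta I\|$. Because $A$ is a uniform algebra and $\theta$ is a homomorphism, $T$ satisfies $\|\phi(T)\|\le\|T\|$ for every automorphism $\phi$ of $\DD$ (compose the $f_k$ with $\phi$). A first-order perturbation argument in $\phi_w(z)=(z-w)/(1-\overline{w}z)$ at $w=0$ (Lemma~\ref{L:xTx=0}, due to Crouzeix--Gilfeather--Holbrook) then shows that any unit vector $x$ at which $T$ attains its norm satisfies $(1-\|T\|^2)\langle x,Tx\rangle=0$; this is where $\|\theta\|>1$ is used, to conclude $\langle x,Tx\rangle=0$. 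The scalar perturbation is then invisible in the relevant direction:
\[
\|T\|^2=\langle Tx,Tx\rangle=\langle (T+\lambda I)x,Tx\rangle\le\|T+\lambda I\|\,\|T\|\le\|\theta+\beta I\|\,\|T\|.
\]
None of this orthogonality mechanism, nor the role of disk automorphisms, appears anywhere in your plan, so the essential idea is missing.
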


The proof of Theorem~\ref{T:maeva} is based on the following lemma,
due to Crouzeix, Gilfeather and Holbrook \cite{CGH14}. 
We write $\sigma(T)$ for the spectrum of $T$.

\begin{lemma}\label{L:xTx=0}
Let $T$ be an $n\times n$ matrix such that $\sigma(T)\subset\overline{\DD}$ and $\|T\|>1$.
Suppose that $\|\phi(T)\|\le\|T\|$ for all automorphisms $\phi$ of $\DD$.
Let $x$ be a unit vector in $\CC^n$ on which $T$ attains its norm.
Then 
\begin{equation}\label{E:claim}
\langle x,Tx\rangle=0.
\end{equation}
\end{lemma}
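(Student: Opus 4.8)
\textbf{Proof proposal for Lemma~\ref{L:xTx=0}.}
The plan is to exploit the hypothesis $\|\phi(T)\|\le\|T\|$ for the one-parameter family of automorphisms that fix no point of the boundary but move the origin, and to differentiate at the identity. Concretely, for $a\in\DD$ small, consider the disk automorphism $\phi_a(z)=(z-a)/(1-\overline a z)$, which satisfies $\phi_a(0)=-a$ and is holomorphic on a neighbourhood of $\overline\DD\supset\sigma(T)$, so $\phi_a(T)$ is well defined by the holomorphic functional calculus and depends real-analytically on $a$. Write $N:=\|T\|$ and let $x$ be a unit vector with $\|Tx\|=N$. The function $a\mapsto\|\phi_a(T)\|^2$ attains a maximum (namely $N^2$) at $a=0$, since $\phi_0(z)=z$; hence its derivative in every real direction must vanish at $a=0$, provided we can differentiate past the norm.

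First I would expand $\phi_a(T)$ to first order in $a$. Since $\phi_a(z)=(z-a)(1+\overline a z+O(|a|^2))=z-a+\overline a z^2+O(|a|^2)$, we get
\begin{equation*}
\phi_a(T)=T-aI+\overline a\,T^2+O(|a|^2).
\end{equation*}
Then I would estimate $\|\phi_a(T)x\|^2$ from below using this unit vector $x$:
\begin{equation*}
\|\phi_a(T)x\|^2=\langle (T-aI+\overline a T^2)x,\,(T-aI+\overline a T^2)x\rangle+O(|a|^2),
\end{equation*}
and expanding, the terms of order $|a|$ are
\begin{equation*}
-2\Re\!\bigl(a\,\langle Tx,x\rangle\bigr)+2\Re\!\bigl(\overline a\,\langle Tx,T^2x\rangle\bigr)
=-2\Re\!\bigl(a\bigl(\langle x,T^*x\rangle-\overline{\langle T^2x,Tx\rangle}\bigr)\bigr).
\end{equation*}
Wait — I should be careful to pair $\langle Tx,x\rangle$ correctly; the point is that the coefficient of the linear term is some fixed expression $2\Re(a\,c)$ with $c$ a complex number built from $\langle Tx,x\rangle$ and $\langle Tx,T^2x\rangle$, together with the contribution coming from the fact that we only have a \emph{lower} bound $N^2=\|T x\|^2\le \|\phi_a(T)\|^2$ and need it to be $\ge \|\phi_a(T)x\|^2$ as a matching \emph{upper} estimate on that particular vector. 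Since $\|\phi_a(T)\|^2\le N^2$ for all small $a$ but $\|\phi_a(T)x\|^2\le\|\phi_a(T)\|^2\le N^2=\|\phi_0(T)x\|^2$, the scalar function $g(a):=\|\phi_a(T)x\|^2$ has a local maximum at $a=0$; as $g$ is real-analytic in $(\Re a,\Im a)$, both partials vanish there, forcing $c=0$.

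The remaining step is to extract $\langle x,Tx\rangle=0$ from the vanishing of $c$. Here I would bring in a second family of automorphisms, the rotations $\phi(z)=e^{i t}z$; the hypothesis gives $\|e^{it}T\|\le\|T\|$, which is automatic and gives nothing, so instead I would use rotations composed with $\phi_a$, i.e.\ replace $a$ by $e^{it}a$, which already is covered by letting $a$ range over a full neighbourhood of $0$, so the vanishing of $c$ is the full first-order information. To identify $c$ I note that $x$ is a maximising vector, so $T^*Tx=N^2x$; using this, $\langle Tx,T^2x\rangle=\langle T^*Tx,Tx\rangle=N^2\langle x,Tx\rangle$ and $\langle Tx,x\rangle=\langle x,T^*x\rangle=\overline{\langle Tx,x\rangle}$ plays against it, and the condition $c=0$ collapses to $(1-N^2)\langle x,Tx\rangle=0$ (up to conjugation). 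Since $\|T\|>1$ we have $N^2\ne1$, hence $\langle x,Tx\rangle=0$. I expect the main obstacle to be the bookkeeping in this last identification: getting the first-order coefficient $c$ correctly in terms of $\langle x,Tx\rangle$, and making sure the maximising relation $T^*Tx=N^2x$ is used rather than assumed about $x$ itself (it need not be an eigenvector of $T$, only of $T^*T$). The role of the spectral hypothesis $\sigma(T)\subset\overline\DD$ is simply to guarantee $\phi_a(T)$ is defined for $a$ near $0$; nothing deeper is needed.
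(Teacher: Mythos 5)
Your proposal is correct and follows essentially the same route as the paper: the first-order expansion $\phi_a(T)=T-aI+\overline a\,T^2+O(|a|^2)$, evaluation of $\|\phi_a(T)x\|^2$ on the maximising vector, vanishing of the first-order term since the argument of $a$ is arbitrary, and the collapse to $(1-\|T\|^2)\langle x,Tx\rangle=0$ via $T^*Tx=\|T\|^2x$. The bookkeeping you worry about works out exactly as you anticipate, so no further changes are needed.
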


For completeness, we include a short proof.

\begin{proof}
For $w\in\DD$, let $\phi_w(z):=(z-w)/(1-\overline{w}z)$.
Then
\[
\phi_w(T)=(T-wI)(I-\overline{w}T)^{-1}=T-wI+\overline{w}T^2+O(|w|^2)
\quad(w\to0).
\]
By hypothesis $\|\phi_w(T)\|\le\|T\|$, so it follows that
\begin{align*}
\|T\|^2&\ge 
\langle \phi_w(T)x,\phi_w(T)x\rangle\\
&=\|Tx\|^2-2\Re\bigl(w\langle x,Tx\rangle\bigr)+ 2\Re\bigl(\overline{w}\langle T^2x,Tx\rangle\bigr)+O(|w|^2)\\
&=\|T\|^2-2\Re\bigl(w\langle (I-T^*T)x,Tx\rangle\bigr)+O(|w|^2).
\end{align*}
Letting $w\to0$, and noting that argument of $w$ is arbitrary, it follows that
\[
\langle (I-T^*T)x,Tx\rangle=0.
\]
Since $T$ attains its norm at $x$, we have $T^*Tx=\|T\|^2x$, whence
\[
(1-\|T\|^2)\langle x,Tx\rangle=0.
\]
Finally, since $\|T\|>1$, we conclude that \eqref{E:claim} holds.
\end{proof}

\begin{remark}
The ideas behind this lemma are developed further in \cites{BG20, CGL18, RW20}.
\end{remark}

\begin{proof}[Proof of Theorem~\ref{T:maeva}]
Let $(f_k)$ be a sequence in $A$ such that $\|f_k\|=1$ and $\|\theta(f_k)\|\to\|\theta\|$. 
Replacing $(f_k)$ by a subsequence, we can suppose that $\theta(f_k)\to T$ in $M_n(\CC)$ 
and $\beta(f_k)\to \lambda$.
We then have  
\[
\|T\|=\lim_{k\to\infty}\|\theta(f_k)\|=\|\theta\|, 
\]
and
\[ 
\|T+\lambda I\|=\lim_{k\to\infty}\|\theta(f_k)+\beta(f_k)I\|\le \|\theta+\beta I\|.
\]
Also, since $\theta$ is a homomorphism, we have
$\sigma(\theta(f_k))\subset\sigma(f_k)\subset\overline{\DD}$ for all $k$, 
and hence  $\sigma(T)\subset\overline{\DD}$.

Let $\phi$ be an automorphism of $\DD$.
Then $\phi\circ f_k\in A$ and $\|\phi\circ f_k\|\le1$,
so $\|\theta(\phi\circ f_k)\|\le\|\theta\|$.
On the other hand, as $\theta$ is a homomorphism, 
we have $\theta(\phi\circ f_k)=\phi(\theta(f_k))\to \phi(T)$ as $k\to\infty$.
Therefore $\|\phi(T)\|\le\|\theta\|=\|T\|$. 
Thus Lemma~\ref{L:xTx=0} applies to $T$.

Let $x\in\CC^n$ be a vector such that
$\|x\|=1$ and $\|Tx\|=\|T\|$.  By Lemma~\ref{L:xTx=0},
if $\|T\|>1$, then $\langle x,Tx\rangle=0$, so
\[
\|T\|^2=\langle Tx,Tx\rangle
=\langle (T+\lambda I)x,Tx\rangle 
\le \|T+\lambda I\|\|\|T\|
\le \|\theta+\beta I\|\|T\|.
\]
Hence $\|T\|\le \|\theta+\beta I\|$. 
As $\|\theta\|=\|T\|$, we deduce that $\|\theta\|\le\|\theta+\beta I\|$.
\end{proof}

Let $\rho\ge1$. We say that an operator $T$ on a Hilbert space $\cH$ is in the class $\cC_\rho$
if $T$ has a unitary $\rho$-dilation, i.e., if there exists a Hilbert space $\cK$ containing $\cH$ as a subspace
and a unitary operator $U$ on $\cK$ such that
\[
T^m=\rho P_\cH U^m|_\cH \quad(m\ge1),
\]
where $P_\cH:\cK\to\cH$ denotes the orthogonal projection of $\cK$ onto $\cH$.
It is known that $\cC_1$ is precisely the set of contractions on $\cH$ \cite{Na53},
and that $\cC_2$ is the set of operators whose numerical range is contained in $\overline{\DD}$
\cites{NF66, Be65}.
The following result is due to Okubo and Ando \cite{OA75}.

\begin{theorem}\label{T:OA}
For each $\rho\ge1$, the closed unit disk is a $\rho$-spectral set for all matrices in the class $\cC_\rho$.
\end{theorem}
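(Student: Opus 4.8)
The plan is to deduce Theorem~\ref{T:OA} from Theorem~\ref{T:maeva}. The point is that a matrix $T$ in the class $\cC_\rho$ gives rise to a continuous unital homomorphism $\theta$ on $A(\DD)$ which, thanks to the $\rho$-dilation, differs from a genuine \emph{contraction} only by a perturbation of the very special shape $\beta I$ with $\beta\in A(\DD)^*$ --- and such perturbations are precisely what Theorem~\ref{T:maeva} controls.

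To set this up, let $T$ be an $n\times n$ matrix in $\cC_\rho$, with unitary $\rho$-dilation $U$ on a Hilbert space $\cK\supseteq\cH=\CC^n$, so that $T^m=\rho P_\cH U^m|_\cH$ for all $m\ge1$ while $T^0=I=P_\cH U^0|_\cH$. Multiplying by $z^m$ and summing the coefficients of an arbitrary polynomial $p$ yields the key identity
\[
p(T)=(1-\rho)\,p(0)\,I+\rho\,P_\cH\,p(U)|_\cH .
\]
Guided by this, I would \emph{define} $\theta\colon A(\DD)\to M_n(\CC)$ by $\theta(f):=(1-\rho)f(0)I+\rho\,P_\cH f(U)|_\cH$, where $f(U)$ is the continuous functional calculus of the normal operator $U$ applied to $f|_{\TT}$, and then check the routine facts: $\theta$ is linear, bounded, unital, and multiplicative (unitality and multiplicativity hold on polynomials and pass to $A(\DD)$ by density and continuity), so that $\theta$ is a continuous unital homomorphism extending $p\mapsto p(T)$. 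I would also record that the linear map $\psi\colon A(\DD)\to M_n(\CC)$, $\psi(f):=P_\cH f(U)|_\cH$, is a contraction: indeed $\|\psi(f)\|\le\|f(U)\|=\max_{\sigma(U)}|f|\le\max_{\overline{\DD}}|f|$ since $\sigma(U)\subseteq\TT$.

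With this in place the argument is short. Let $\beta\in A(\DD)^*$ be $(\rho-1)$ times evaluation at the origin; since $\rho\ge1$ this is a bounded functional. The identity above says exactly that $\theta+\beta I=\rho\psi$, so $\|\theta+\beta I\|\le\rho$. If $\|\theta\|\le1$ we are done because $1\le\rho$; otherwise $\|\theta\|>1$, and Theorem~\ref{T:maeva} yields $\|\theta\|\le\|\theta+\beta I\|\le\rho$. Either way $\|p(T)\|=\|\theta(p)\|\le\rho\max_{z\in\overline{\DD}}|p(z)|$ for every polynomial $p$, which is precisely the assertion that $\overline{\DD}$ is a $\rho$-spectral set for $T$.

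The only step requiring genuine care --- which I would single out as the main obstacle --- is the bookkeeping behind the displayed identity: the relation $T^m=\rho P_\cH U^m|_\cH$ holds for $m\ge1$ but fails at $m=0$, and it is exactly this mismatch at the constant term that generates the summand $(1-\rho)p(0)I$, i.e.\ the perturbation $\beta I$ that Theorem~\ref{T:maeva} is tailored to absorb. The rest --- checking that $\theta$ is genuinely a continuous unital homomorphism on all of $A(\DD)$ (so that Theorem~\ref{T:maeva} legitimately applies, even when $\sigma(T)$ touches $\TT$), that $\psi$ is contractive, and the trivial case $\|\theta\|\le1$ --- is mechanical.
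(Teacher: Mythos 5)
Your proposal is correct and follows essentially the same route as the paper: both derive the identity $p(T)=\rho P_{\cH}p(U)|_{\cH}+(1-\rho)p(0)I$ from the $\rho$-dilation, set $\beta(f)=(\rho-1)f(0)$, observe that $\|\theta+\beta I\|\le\rho$, and invoke Theorem~\ref{T:maeva}. Your explicit functional-calculus formula for the extension of $\theta$ to $A(\DD)$ and the separate treatment of the trivial case $\|\theta\|\le1$ are just slightly more careful renderings of steps the paper leaves implicit.
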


Theorem~\ref{T:maeva} permits us to give a new and simple proof of this result.

\begin{proof}[Proof of Theorem~\ref{T:OA}]
Fix $\rho\ge1$ and let $T$ be an $n\times n$ matrix in the class $\cC_\rho$.
Then there exists a unitary $\rho$-dilation $U$ of $T$.
Let $p$ be a polynomial. Writing $q:=p-p(0)$,  we then have
\begin{align*}
p(T)&=q(T)+p(0)I\\
&=\rho P_{\CC^n}q(U)|_{\CC^n}+p(0)I\\
&=\rho P_{\CC^n}p(U)|_{\CC^n}+(1-\rho)p(0)I,
\end{align*}
whence
\[
\|p(T)+(\rho-1)p(0)I\|= \rho\|P_{\CC^n}p(U)|_{\CC^n}\|\le\rho \|p(U)\|\le \rho\sup_{\overline{\DD}}|p|.
\]
Defining $\theta(p):=p(T)$ and $\beta(p):=(\rho-1)p(0)$, we see that both $\beta$ and $\theta$ extend by
continuity to the whole disk algebra $A(\DD)$ and that the extended maps satisfy $\|\theta+\beta I\|\le \rho$.
By Theorem~\ref{T:maeva}, it follows that $\|\theta\|\le \rho$, in other words, 
that $\overline{\DD}$ is a $\rho$-spectral set for $T$.
\end{proof}

\begin{remarks}
(i) Taking $\rho=2$ in Theorem~\ref{T:maeva}, we recover the fact that $\overline{\DD}$ is a $2$-spectral set for all matrices $T$ whose numerical range is contained in $\overline{\DD}$, i.e., that Crouzeix's conjecture holds for $\overline{\DD}$.

(ii) Okubo and Ando established their result for general Hilbert space operators. Although Theorem~\ref{T:maeva} was proved for matrices, a standard argument recovers the more general statement of Okubo and Ando; see for instance the proof of Theorem~2 in \cite{Cr07}.
\end{remarks}


\section{Proof of Theorem~\ref{T:vN}}\label{S:vN}

Theorem~\ref{T:vN} is a consequence of another result,
which may be of interest in its own right.

\begin{theorem}\label{T:proj}
Let $A$ be a uniform algebra, let $\theta:A\to M_n(\CC)$ be a continuous homomorphism,
and let $\alpha:A\to A$ be a unital antilinear contraction.
Assume that $\theta_\alpha$, defined by \eqref{E:thetaalpha}, satisfies $\|\theta_\alpha\|\le1$.
If $p$ is a self-adjoint projection in $A$,
then $\theta(p)$ is a self-adjoint projection in $M_n(\CC)$.
\end{theorem}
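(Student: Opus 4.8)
The plan is to prove that the idempotent $E:=\theta(p)$ --- which is idempotent because $\theta$ is a homomorphism and $p^2=p$ --- is in fact self-adjoint, since a self-adjoint idempotent is exactly an orthogonal projection. Setting aside the trivial cases $E=0$ and $E=I_n$, put $r:=\rank E\in\{1,\dots,n-1\}$. The first thing to exploit is the commutativity of $A$: since $p(1-p)=0$ we have $pf(1-p)=(1-p)fp=0$ for all $f\in A$, and applying the unital homomorphism $\theta$ gives $E\theta(f)(I-E)=(I-E)\theta(f)E=0$, hence $E\theta(f)=E\theta(f)E=\theta(f)E$. So $E$ is central in $\theta(A)$. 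Choosing orthonormal coordinates in which $\operatorname{ran}E=\CC^r\oplus\{0\}$, we then have $E=\bigl(\begin{smallmatrix}I_r&B\\0&0\end{smallmatrix}\bigr)$ for a unique $B\in M_{r,n-r}(\CC)$, and $E=E^*$ if and only if $B=0$; moreover centrality forces every matrix in $\theta(A)$ to have block form $\bigl(\begin{smallmatrix}x&xB-By\\0&y\end{smallmatrix}\bigr)$.

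Next I would extract the essential consequence of the hypotheses $\|\theta_\alpha\|\le1$ and $\alpha(1)=1$, namely that $Q:=\theta_\alpha(p)$ is a positive contraction, $0\le Q\le I$. Indeed, for $\lambda,\mu$ in the closed unit disk the function $\lambda p+\mu(1-p)$ lies in $A$ with norm at most $1$, and since $\theta_\alpha$ is $\CC$-linear and unital (this is precisely where $\alpha(1)=1$ enters, via $\theta_\alpha(1)=I$), applying $\theta_\alpha$ gives $\|\lambda Q+\mu(I-Q)\|\le1$ for all such $\lambda,\mu$. Taking $\mu=1$ and $\lambda=e^{i\phi}$ and expanding $\|I+(e^{i\phi}-1)Q\|^2\le1$ to first order in $\phi$ forces the Hermitian matrix $i(Q-Q^*)$ to be both $\le0$ and $\ge0$, so $Q=Q^*$; then $\|Q\|\le1$ together with $\|I-Q\|\le1$ gives $0\le Q\le I$.

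The two strands meet as follows. Write $\theta(\alpha(p))=\bigl(\begin{smallmatrix}a_p&a_pB-Bd_p\\0&d_p\end{smallmatrix}\bigr)$ according to the block form above. The identity $2Q=E+\theta(\alpha(p))^*$ combined with $Q=Q^*$ and $0\le Q\le I$ yields: $a_p=a_p^*$ and $d_p=d_p^*$; $a_p\le I_r$ and $d_p\ge0$; comparing off-diagonal blocks, $a_pB-Bd_p=B$, i.e.\ $a_pB=B(I_{n-r}+d_p)$; and, testing $Q\ge0$ on vectors supported in the second coordinate block, $\ker d_p\subseteq\ker B$. To finish, right-multiply $a_pB=B(I_{n-r}+d_p)$ by $B^*$ to get $Bd_pB^*=-(I_r-a_p)BB^*$; taking traces, the left side is $\ge0$ (as $Bd_pB^*\ge0$, since $d_p\ge0$) while the right side is $\le0$ (as $\operatorname{tr}\bigl((I_r-a_p)BB^*\bigr)\ge0$, being the trace of a product of positive semidefinite matrices), so $Bd_pB^*=0$, whence $Bd_p=0$. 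Thus $B$ vanishes on $\operatorname{ran}d_p$; since it also vanishes on $\ker d_p$ and $\CC^{n-r}=\operatorname{ran}d_p\oplus\ker d_p$, we get $B=0$, so $E=E^*$ and $\theta(p)$ is a self-adjoint projection.

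The delicate point, I expect, is this last step: forcing the analytic fact ``$Q$ is a positive contraction'' to engage with the algebraic rigidity ``$E$ is idempotent and central''. The hinge is that self-adjointness of $Q$, combined with the block-triangular shape of $\theta$ coming from centrality of $E$, pins the $(1,2)$-block of $\theta(\alpha(p))$ to be exactly $B$; this manufactures the intertwining relation $a_pB=B(I_{n-r}+d_p)$, after which the spectral bounds $a_p\le I_r$ and $d_p\ge0$ close the argument. Everything else is routine bookkeeping with block matrices and positive semidefinite matrices.
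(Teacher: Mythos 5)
Your proof is correct, and it reaches the conclusion by a route that differs from the paper's in its bookkeeping, though the two arguments share the same skeleton: (a) self-adjointness (indeed positivity) of $\theta_\alpha(p)$, extracted from the facts that $\theta_\alpha$ is a unital linear contraction and $p$ has spectrum in $\{0,1\}$; (b) a block-triangular structure coming from commutativity; and (c) a norm/positivity bound that kills the off-diagonal block. The paper gets (a) from a general lemma ($W(\theta_\alpha(f))\subset\conv(\sigma(f))$, proved by enclosing $\sigma(f)$ in disks), whereas you reprove the special case $0\le\theta_\alpha(p)\le I$ by hand via the test functions $\lambda p+\mu(1-p)$ and a first-order perturbation argument; both are fine. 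For (b), the paper simultaneously triangularizes all of $\theta(A)$ and works with the symmetry $u=2p-1$, obtaining real diagonal matrices $D,E$ and the entrywise relation $(2-d_j-e_k)x_{jk}=0$; you instead use only the $2\times2$ block decomposition induced by $\operatorname{ran}\theta(p)$, which yields the matrix intertwining relation $a_pB=B(I+d_p)$ without needing any triangularization theorem. For (c), the paper closes with a $2\times 2$ submatrix norm computation entry by entry, while you close with a trace argument ($\operatorname{tr}(Bd_pB^*)=-\operatorname{tr}((I-a_p)BB^*)$ forces both sides to vanish) together with the observation $\ker d_p\subseteq\ker B$ from positivity of the full block matrix. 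Your version is slightly more coordinate-free and avoids the appeal to simultaneous triangularization of a commutative matrix algebra; the paper's version localizes the contradiction to a single entry. Both are complete and correct.
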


The proof passes via the following lemma.
Recall that $\sigma(T)$ and $W(T)$ denote the spectrum and numerical range of $T$ respectively.
Also, we write $\conv(S)$ for the convex hull of $S$.

\begin{lemma}\label{L:proj}
Under the hypotheses of Theorem~\ref{T:proj}, 
\[
W(\theta_\alpha(f))\subset\conv(\sigma(f)) \quad(f\in A).
\]
\end{lemma}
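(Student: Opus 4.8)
The goal is to show $W(\theta_\alpha(f))\subset\conv(\sigma(f))$ for every $f\in A$. Since the convex hull of a compact set is the intersection of all closed half-planes containing it, it suffices to prove that for every closed half-plane $H$ with $\sigma(f)\subset H$, we also have $W(\theta_\alpha(f))\subset H$. Applying an affine transformation $z\mapsto cz+d$ (under which both $\sigma$ and $W$ transform correspondibly, and which maps half-planes to half-planes while preserving the class of data in the hypotheses up to rescaling—this last point needs a small check), we may reduce to the case where $H$ is the closed left half-plane $\{\Re z\le 0\}$. So the real content is: if $\Re z\le 0$ on $\sigma(f)$, then $\Re\langle \theta_\alpha(f)x,x\rangle\le 0$ for every unit vector $x\in\CC^n$.

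**Main step.** To exploit $\|\theta_\alpha\|\le 1$ we pass to a functional-calculus/exponential trick. If $\sigma(f)\subset\{\Re z\le 0\}$, then for every $t>0$ the function $e^{tf}\in A$ satisfies $\|e^{tf}\|_A=\sup_X|e^{tf}|$, and one would like to say this is controlled—but on a general uniform algebra $\|e^{tf}\|$ need not be small. The correct device is to work spectrally on the matrix side: consider $g_t:=(1-tf)^{-1}$ or better the Cayley-type resolvent. Actually the cleanest route: for $\Re z\le 0$ on $\sigma(f)$ and any $t\ge 0$, the element $f-tI$ is invertible-free of trouble; instead consider the homomorphism applied to resolvents. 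The key identity is that $\theta$ being a homomorphism forces $\sigma(\theta(f))\subset\sigma(f)\subset\{\Re z\le 0\}$, so $\theta(f)$ is dissipative-spectrum. Combined with the numerical-range constraint coming from $\theta_\alpha$: write $2\theta_\alpha(f)=\theta(f)+\theta(\alpha(f))^*$, so $2\Re\theta_\alpha(f)=\Re\theta(f)+\Re\theta(\alpha(f))$ (as the Hermitian part of a sum of adjoints), and use $\|\theta_\alpha(\cdot)\|\le 1$ on a suitable unimodular-type element. The plan is: fix a unit vector $x$, apply the $\|\theta_\alpha\|\le1$ inequality to $e^{-itf}$ for real $t$ if $f$ is close to self-adjoint, or more robustly, run the argument used in \cite{CGH14}-style Lemma~\ref{L:xTx=0}: perturb $f$ by an automorphism/Möbius-type map of the half-plane and differentiate. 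Since $\sigma(f)$ lies in the closed left half-plane, the maps $\psi_s(z)=z/(1-sz)$ for small $s>0$ are holomorphic on a neighbourhood of $\sigma(f)$ (as $1-sz\ne 0$ there) and $\|\psi_s\circ f\|_A\le$ something controlled; feeding $\psi_s\circ f$ into $\theta_\alpha$ and differentiating in $s$ at $s=0$ should produce $\Re\langle\theta_\alpha(f)x,x\rangle\le 0$ exactly as in the lemma above.

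**The hard part.** The main obstacle is controlling $\|\psi_s\circ f\|_A$ — equivalently $\sup_X|\psi_s\circ f|$ — when $f$ is bounded but $\sigma(f)$ (a subset of the spectrum, not the full range $f(X)$) lies in the left half-plane. In a uniform algebra $f(X)$ can be much larger than $\sigma(f)$, so one cannot naively apply $\psi_s$ to $f$ itself. The fix is to work on the matrix side only: replace $f$ by $\theta(f)=:T$, which satisfies $\sigma(T)\subset\sigma(f)\subset\{\Re z\le 0\}$, so $\psi_s(T)$ makes sense via matrix functional calculus, and then relate $\theta_\alpha$ to $T$ through the hypothesis. Concretely, I expect one must show $\theta(\psi_s\circ f)=\psi_s(\theta(f))$ — which holds because $\psi_s$ is holomorphic near $\sigma(f)$, so $\psi_s\circ f$ is a well-defined element of $A$ (holomorphic functional calculus in the uniform algebra) and $\theta$ intertwines it — and simultaneously that $\alpha$ interacts well enough with this functional calculus that $\theta_\alpha(\psi_s\circ f)$ stays bounded by $1$ in norm. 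Getting a clean estimate on $\sup_X|\psi_s\circ f|$ (which requires $\psi_s$ to be bounded on the \emph{full} range of $f$, not just $\sigma(f)$) is the genuine difficulty; one likely sidesteps it by noting that, since only the derivative at $s=0$ is needed, a first-order estimate $\|\psi_s\circ f\|_A\le 1+O(s^2)$ can fail, so instead one should use a \emph{bounded} family of test functions, e.g. Möbius maps $m_s$ of the disk precomposed with a fixed conformal map of a large disk onto a half-plane strip containing both $\sigma(f)$ and enough of $f(X)$ — at which point the argument becomes a careful but routine adaptation of the proof of Lemma~\ref{L:xTx=0}, with $\DD$ replaced by a half-plane and "attains its norm" replaced by "numerical range extremal."
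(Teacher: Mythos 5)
There is a genuine gap: your proposal never arrives at a complete argument, and the obstacle you identify in your ``hard part'' is one you cannot overcome along the route you have chosen, whereas it dissolves entirely once one notices the key fact you are missing. In a uniform algebra the norm is the sup norm, so $\|f^k\|=\|f\|^k$ and the spectral radius of $f$ equals $\|f\|$; consequently the condition $\sigma(f)\subset\overline{D}(c,r)$ is \emph{equivalent} to $\|f-c1\|\le r$. This is the whole point. Since $\alpha$ is unital and $\theta$ is a unital homomorphism, $\theta_\alpha$ is a unital linear contraction, so $\|f-c1\|\le r$ gives $\|\theta_\alpha(f)-cI\|\le r$, hence $|\langle(\theta_\alpha(f)-cI)x,x\rangle|\le r$ for unit vectors $x$, i.e.\ $W(\theta_\alpha(f))\subset\overline{D}(c,r)$. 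Intersecting over all closed disks containing $\sigma(f)$ (whose intersection is $\conv(\sigma(f))$, since any point outside the convex hull is separated from the compact set by some closed disk) finishes the proof in a few lines. Your reduction to half-planes is in the same spirit, but half-planes are the wrong test sets here precisely because they force you into unbounded functions ($e^{tf}$, resolvents, $\psi_s\circ f$) whose norms you then cannot control; disks let the hypothesis $\|\theta_\alpha\|\le1$ be applied directly to the bounded element $f-c1$ itself, with no functional calculus at all.

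A secondary point: your stated worry that ``in a uniform algebra $f(X)$ can be much larger than $\sigma(f)$'' is backwards. Each point evaluation at a point of $X$ is a character of $A$, so $f(X)\subseteq\sigma(f)$; the spectrum can only be \emph{larger} than the range (e.g.\ $\sigma(z)=\overline{\DD}$ for the disk algebra realized on the circle). What saves the day is not any containment between $f(X)$ and $\sigma(f)$ but the identity $\sup_{\lambda\in\sigma(f)}|\lambda|=\|f\|$, which holds for every uniform algebra and is exactly what converts the spectral hypothesis into a norm inequality that $\theta_\alpha$ can see.
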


\begin{proof}
Let $w\in \CC\setminus\conv(\sigma(f))$. Then there exists a closed disk $\overline{D}(c,r)$
such that $\sigma(f)\subset \overline{D}(c,r)$ and $w\notin \overline{D}(c,r)$. The condition $\sigma(f)\subset \overline{D}(c,r)$
is equivalent to $\|f-c1\|\le r$. The assumptions on $\theta$ and $\alpha$ imply that $\theta_\alpha$ is a unital linear
contraction. Therefore $\|\theta_\alpha(f)-cI\|\le r$. It follows that
\[
|\langle (\theta_\alpha(f)-cI)x,x\rangle| \le r \quad(x\in \CC^n,~\|x\|=1),
\]
whence $W(\theta_\alpha(f))\subset\overline{D}(c,r)$. In particular, $w\notin W(\theta_\alpha(f))$.
\end{proof}

\begin{proof}[Proof of Theorem~\ref{T:proj}]
Since $\theta(A)$ is a commutative subalgebra of $M_n(\CC)$, 
there exists a unitary matrix $U\in M_n(\CC)$ such that $U^*\theta(A)U$
is an algebra of upper triangular matrices (see e.g.\ \cite[Theorem 1]{Ne67}). 
Defining $\tilde{\theta}:A\to M_n(\CC)$ by $\tilde{\theta}(f):=U^*\theta(f)U$
gives a continuous homomorphism that still satisfies
$\|\tilde{\theta}_\alpha\|\le1$,
and if the conclusion of the theorem holds for $\tilde{\theta}$, then it also holds for $\theta$.
Thus, replacing $\theta$ by $\tilde{\theta}$ at the outset,
we may as well assume that $\theta(A)$ consists of upper triangular matrices. 

Let $p$ be a self-adjoint projection in $A$ and set $u:=2p-1$. 
Then $u$ is self-adjoint and $u^2=1$.
Thus $\theta(u)$ is an upper triangular matrix such that $\theta(u)^2=I$,
which forces it to have the form 
\[
\theta(u)=
\begin{pmatrix}
I &X\\  0 &-I
\end{pmatrix},
\]
for some rectangular matrix $X$. Since $u$ is self-adjoint, 
Lemma~\ref{L:proj} implies that $\theta_\alpha(u)$ is self-adjoint,
which entails that
\[
\theta(\alpha(u))=
\begin{pmatrix}
D &X\\ 0 &-E
\end{pmatrix},
\]
for some real diagonal matrices $D$ and $E$.
The fact that $\theta(u)$ and $\theta(\alpha(u))$ commute implies that $DX-X=X-XE$, whence
\begin{equation}\label{E:commute}
2X-DX-XE=0.
\end{equation}
Also, since $\|\theta_\alpha(u)\|\le \|u\|=1$, we must have
\begin{equation}\label{E:norm2}
\left\| 
\begin{pmatrix}
(I+D)/2 &X/2\\  X^*/2 &-(I+E)/2
\end{pmatrix}
\right\|
\le 1.
\end{equation}

Suppose, if possible, that $X$ has a non-zero entry $x_{jk}$.
Condition \eqref{E:commute}  gives $(2-d_j-e_k)x_{jk}=0$,
where $d_j$ and $e_k$ are the $j$-th and $k$-th entries of $D,E$ respectively.
Therefore $d_j+e_k=2$. On the other hand, condition \eqref{E:norm2} implies that $|1+d_j|/2\le1$ and $|1+e_k|/2\le1$.
Therefore $d_j=e_k=1$. But then the matrix in the left-hand-side of \eqref{E:norm2} contains a $2\times 2$ sub-matrix of the form
\[
\begin{pmatrix}
1 &x_{jk}/2\\
\overline{x}_{jk}/2 &-1
\end{pmatrix}
\]
which must still have norm at most $1$. This forces $x_{jk}=0$, a contradiction. We conclude that $X=0$.
Therefore $\theta(u)$ is a diagonal matrix. Therefore so too is $\theta(p)$. Clearly $\theta(p)^2=\theta(p)$,
so $\theta(p)$ is a self-adjoint projection, as claimed.
\end{proof}

\begin{proof}[Proof of Theorem~\ref{T:vN}]
Suppose first  that $A$ is a commutative von Neumann algebra, 
so $A=L^\infty(\Omega,\mu)$ for some measure space $(\Omega,\mu)$.
Let $f\in A$ with $\|f\|=1$. Let $\{D_1,\dots,D_r\}$ be a partition
of $\overline{\DD}$ into Borel sets of diameter at most $\epsilon$.
Pick an element $d_j\in D_j$ for each $j$ and set $g:=\sum_1^r d_jp_j$,
where $p_j:=1_{f^{-1}(D_j)}$.
Then $\|f-g\|_\infty\le\epsilon$. 
Since the $p_j$ are orthogonal self-adjoint projections, so are the $\theta(p_j)$,
by Theorem~\ref{T:proj}.
Therefore
\[
\|\theta(g)\|=\Bigl\|\sum_1^r d_j\theta(p_j)\Bigr\|=\max_j|d_j|\le 1.
\]
It follows that $\|\theta(f)\|\le1+\|\theta\|\epsilon$.
As $\epsilon$ is arbitrary, $\|\theta(f)\|\le1$.
Therefore $\|\theta\|\le1$. Finally, 
as $\theta(1)=I$,
we in fact have $\|\theta\|=1$. This implies in particular that $\theta$ is positive \cite[Proposition 2.11]{paulsen2002}, so that $\theta$ is a $*$-homomorphism.

Now suppose that $A$ is a general commutative C*-algebra.
Then its double dual $A''$ is a commutative von Neumann algebra \cite[Theorem A.5.6]{BLM2004}.
Since $A$ is weak*-dense in $A''$
and the dualized maps $\theta'':A''\to M_n(\CC)$ and $\alpha'':A''\to A''$
are weak*-continuous,
we have  $\|\alpha''\|=\|\alpha\|=1$ and
$\|\theta''_{\alpha''}\|=\|\theta_\alpha\|\le 2$.
Also $\theta''$ is still a homomorphism. 
By the first part of the proof, we infer that $\theta''$ is a $*$-homomorphism, and thus so is $\theta$.
\end{proof}

\section{A completely bounded version of the conjecture}\label{S:CB}

In this section, we discuss a completely bounded version of the preceding ideas, thus framing them within the context of operator algebras. As we will see below, this allows dilation techniques to enter the picture and to shed new light on the problem. 

Throughout this section, $B(\cH)$ denotes the $C^*$-algebra of bounded linear operators on some Hilbert space $\cH$. An operator algebra is simply a \emph{unital} subalgebra $A\subset B(\cH)$. While it is possible to define these objects without referring to a particular choice of representation on Hilbert space \cite[Corollary 16.7]{paulsen2002}, for our purposes the previous concrete description suffices. We remark that subalgebras of $C^*$-algebras are operator algebras, and in particular so are uniform algebras.

Given a natural number $n\geq 1$, we denote by $M_n(A)$ the algebra of $n\times n$ matrices with entries from $A$, which we view as a subalgebra of bounded linear operators acting on $\cH^{(n)}=\cH\oplus \cH\oplus \ldots \oplus \cH$. In particular, $M_n(A)$ is endowed with a norm under this identification. 
Given a map $\phi:A\to B(\cK)$, for each natural number $n\geq 1$, we may define a map $\phi^{(n)}:M_n(A)\to B(\cK^{(n)})$ as
\[
\phi^{(n)}([a_{ij}])=[\phi(a_{ij})] \quad \bigl([a_{ij}]\in M_n(A)\bigr).
\]
It is readily verified that $\phi^{(n)}$ is linear (respectively, a homomorphism) whenever $\phi$ is. 

If $\phi$ is linear (or anti-linear), we say that $\phi$ is \emph{completely bounded} if the quantity
\[
\|\phi\|_{cb}=\sup_n \|\phi^{(n)}\|
\]
is finite. Furthermore, we say that $\phi$ is \emph{completely contractive} if $\|\phi\|_{cb}\leq 1$.

We can now state the refined version of the main conjecture, which is the central topic of this section.

\begin{conjecture}\label{C:conjCB}
Let $A$ be an operator algebra, let $\theta:A\to M_n(\CC)$ be a unital completely bounded homomorphism,
and let $\alpha:A\to A$ be a unital antilinear complete contraction. 
Define a linear map $\theta_\alpha:A\to M_n(\CC)$ by \eqref{E:thetaalpha}.
If $\|\theta_\alpha\|_{cb}\leq 1$, then $\|\theta\|_{cb}\leq 2$.
\end{conjecture}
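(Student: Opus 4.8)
The plan is to bring dilation theory to bear, exploiting the complete positivity concealed in the hypothesis. Since $\theta_\alpha$ is unital and completely contractive, a standard application of Arveson's extension theorem shows that it extends to a unital completely positive map $\Phi:C^*(A)\to M_n(\CC)$ on the $C^*$-subalgebra of $B(\cH)$ generated by $A$; by Stinespring's theorem there are then a Hilbert space $\cK$, a unital $*$-representation $\pi$ of $C^*(A)$ on $\cK$, and an isometry $V:\CC^n\to\cK$ with $\theta_\alpha(f)=V^*\pi(f)V$ for all $f\in A$. On the other side, since $\theta$ is a unital completely bounded homomorphism on an operator algebra, Paulsen's similarity theorem furnishes an invertible $S\in M_n(\CC)$ and a unital completely contractive homomorphism $\sigma:A\to M_n(\CC)$ with $\theta=S\sigma(\cdot)S^{-1}$ and $\|\theta\|_{cb}\le\|S\|\,\|S^{-1}\|$, where $S$ can moreover be chosen so that $\|S\|\,\|S^{-1}\|=\|\theta\|_{cb}$; the homomorphism $\sigma$ is itself the compression of a $*$-representation of $C^*(A)$ to a semi-invariant subspace. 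The goal is to play these two models against each other and force $\|S\|\,\|S^{-1}\|\le2$.

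The bridge between the two pictures is the defining identity $\theta(f)+\theta(\alpha(f))^*=2\theta_\alpha(f)$ together with the fact that $\alpha$ is a complete contraction. A promising intermediate target is a completely bounded counterpart of Lemma~\ref{L:xTx=0} and Theorem~\ref{T:maeva}: the intuition is that $\theta_\alpha$ is the part of $\theta$ that is genuinely under control, and that the ``error term'' $\frac12\bigl(\theta(f)-\theta(\alpha(f))^*\bigr)$ cannot be exploited to push the completely bounded norm of $\theta$ beyond that of $\theta_\alpha$. Concretely, mimicking the proof of Theorem~\ref{T:maeva}, one would select a sequence of near-extremal elements (now matrices over $A$), pass to a subsequence to extract limiting data consisting of a matrix $T$ together with the associated Stinespring dilation of $\theta_\alpha$, check the hypotheses of a dilation-theoretic rigidity lemma in the spirit of Lemma~\ref{L:xTx=0}, and read off the desired structure of $S$.

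The principal obstacle — and the reason this remains a conjecture — is exactly this last point. The map $\alpha$ is merely an antilinear contraction, carrying no multiplicative structure at all, so the identity above constrains only a ``symmetric'' combination of $\theta$ with $\theta\circ\alpha(\cdot)^*$, and there is no visible algebraic mechanism that produces the similarity $S$; this is in sharp contrast with the special cases of Theorems~\ref{T:CGL} and~\ref{T:vN}, where the hypotheses $\alpha(A)\subseteq\CC 1$ or self-adjointness of $A$ supply precisely such a mechanism. Passing to the completely bounded category replaces the bare inequality $\|\theta_\alpha\|\le1$ by a genuine Stinespring dilation and a genuine Paulsen similarity, which gives considerably more room to maneuver, but closing the gap still appears to require a genuinely new idea. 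As preliminary steps, one should first establish the completely bounded analogues of Theorems~\ref{T:CGL} and~\ref{T:vN} — their proofs are essentially positivity arguments, and, for instance, Lemma~\ref{L:proj} upgrades at once once $\theta_\alpha$ is extended to the unital completely positive map $\Phi$ on $C^*(A)$ — and then try to recover the weaker bound $\|\theta\|_{cb}\le1+\sqrt2$; even this is not automatic in the general operator-algebra setting, because amplifying the computation in the proof of Theorem~\ref{T:basic} to $M_N(A)$ introduces a transpose when one forms the adjoint of an operator matrix, and that transpose destroys the contractivity of the amplified antilinear map unless $A$ is abelian, so one would have to carry out the $\|\,\cdot\,\|^4$ estimate inside the dilation rather than by a naive amplification.
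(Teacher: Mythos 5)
You have not proved the statement, and neither does the paper: this is Conjecture~\ref{C:conjCB}, which the authors explicitly leave open. Your proposal is candid about this, so there is no error to flag, only the (unavoidable) gap that the central implication is never established. What you outline as a strategy coincides almost exactly with the partial machinery the paper itself assembles in \S\ref{S:CB}: the Arveson--Stinespring step applied to the unital complete contraction $\theta_\alpha$ is precisely implication (i)$\Rightarrow$(ii) of Theorem~\ref{T:equiv}, and the Paulsen-similarity reformulation of $\|\theta\|_{cb}\le 2$ is Lemma~\ref{L:simz}. The paper, like you, cannot connect the two pictures in general; it only does so under extra hypotheses (the kernel condition $z^m\in\ker(\theta\circ\alpha)$ in Theorem~\ref{T:equiv}, or the spectral hypothesis in Proposition~\ref{P:singleton}), and your diagnosis of \emph{why} the bridge is missing --- $\alpha$ carries no multiplicative structure, so the hypothesis only controls a symmetrized combination and produces no similarity --- is the correct one.

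Two of your side remarks deserve comment. First, your suggestion to establish completely bounded analogues of Theorems~\ref{T:CGL} and~\ref{T:vN} as preliminary steps is partly addressed in the paper: the authors note that Theorem~\ref{T:vN} already yields Conjecture~\ref{C:conjCB} for commutative $C^*$-algebras (since $*$-homomorphisms are automatically completely contractive), but they state that they do \emph{not} know how to adapt Theorem~\ref{T:CGL} to the cb setting, so that half of your program is itself open. Second, your observation that naively amplifying the $\|\cdot\|^4$ computation of Theorem~\ref{T:basic} runs into a transpose when one forms adjoints of operator matrices is a genuine subtlety; the paper's Proposition~\ref{P:clever} dismisses this in one line by appealing to Remark~(i) after Theorem~\ref{T:basic} (and its proof as printed even concludes with the wrong constant, $2$ instead of $1+\sqrt2$), so your caution here is warranted rather than a defect of your proposal.
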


We start by showing that the estimate  $\|\theta\|_{cb}\leq 1+\sqrt{2}$ always holds, mirroring the previous setting.

\begin{proposition}\label{P:clever}
Let $A$ be an operator algebra. Let $\theta,\alpha$ and $\theta_\alpha$ be as in Conjecture \ref{C:conjCB}. Then $\|\theta\|_{cb}\leq 1+\sqrt{2}$.
\end{proposition}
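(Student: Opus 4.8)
The plan is to mimic the proof of Theorem~\ref{T:basic} verbatim, but applied simultaneously to all the amplifications $\theta^{(m)}$. Fix $m\geq 1$. For a matrix $F\in M_m(A)$ of norm at most $1$, I would write down the same chain of norm estimates as in \S\ref{S:basic}, with $\theta^{(m)}(F)$ in place of $\theta(f)$:
\begin{align*}
\|\theta^{(m)}(F)\|^4
&=\|\theta^{(m)}(F)\theta^{(m)}(F)^*\theta^{(m)}(F)\theta^{(m)}(F)^*\|\\
&=\bigl\|\theta^{(m)}(F)\bigl(\theta^{(m)}(F)+\theta^{(m)}(\alpha^{(m)}(F))^*\bigr)^*\theta^{(m)}(F)\theta^{(m)}(F)^*\\
&\qquad\qquad -\theta^{(m)}(F)\theta^{(m)}(\alpha^{(m)}(F))\theta^{(m)}(F)\theta^{(m)}(F)^*\bigr\|.
\end{align*}
The first term is bounded using $\|\theta^{(m)}(F)\|\le\|\theta\|_{cb}$ together with the fact that $\theta^{(m)}(F)+\theta^{(m)}(\alpha^{(m)}(F))^*=2(\theta_\alpha)^{(m)}(F)$, so its norm is at most $2\|\theta_\alpha^{(m)}\|\le 2\|\theta_\alpha\|_{cb}\le 2$ since $\theta_\alpha^{(m)}=(\theta^{(m)})_{\alpha^{(m)}}$. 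The second term is bounded by factoring $\theta^{(m)}(F)\theta^{(m)}(\alpha^{(m)}(F))\theta^{(m)}(F)=\theta^{(m)}(F\alpha^{(m)}(F)F)$, using multiplicativity of $\theta^{(m)}$, and then $\|\theta^{(m)}(F\alpha^{(m)}(F)F)\|\le\|\theta\|_{cb}\|F\alpha^{(m)}(F)F\|\le\|\theta\|_{cb}\|\alpha^{(m)}\|\|F\|^3\le\|\theta\|_{cb}$, where the final bound uses $\|\alpha^{(m)}\|\le\|\alpha\|_{cb}\le 1$.

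Putting these together gives $\|\theta^{(m)}(F)\|^4\le 2\|\theta\|_{cb}^3+\|\theta\|_{cb}^2$ for every $m$ and every contraction $F\in M_m(A)$. Taking the supremum over $F$ and then over $m$ yields $\|\theta\|_{cb}^4\le 2\|\theta\|_{cb}^3+\|\theta\|_{cb}^2$, so dividing by $\|\theta\|_{cb}^2$ (which is at least $1$ since $\theta$ is unital) gives $\|\theta\|_{cb}^2\le 2\|\theta\|_{cb}+1$, hence $\|\theta\|_{cb}\le 1+\sqrt 2$.

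The only points requiring a moment of care — none of them genuine obstacles — are the bookkeeping identities $\theta_\alpha^{(m)}=(\theta^{(m)})_{(\alpha^{(m)})}$ and $(\alpha^{(m)})(F)^* $ appearing correctly inside $\theta^{(m)}$ of the adjoint (the amplification of an antilinear map still intertwines the matrix transpose appropriately), and the observation that $\theta^{(m)}$ is a homomorphism so that the factorization $\theta^{(m)}(F)\theta^{(m)}(\alpha^{(m)}(F))\theta^{(m)}(F)=\theta^{(m)}(F\alpha^{(m)}(F)F)$ is legitimate. One should also note that $M_m(A)$ is a Banach algebra (a subalgebra of $B(\cH^{(m)})$) so that $\|FGF\|\le\|F\|\,\|G\|\,\|F\|$ holds, exactly as in the first remark after Theorem~\ref{T:basic}. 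With these remarks in place the argument is purely formal, a direct transcription of the scalar case.
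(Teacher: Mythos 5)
Your argument is correct and is essentially the paper's own proof: the paper simply invokes the remark that Theorem~\ref{T:basic} holds for arbitrary Banach algebras and C*-algebras and applies it to the amplifications $\theta^{(m)}$ and $\alpha^{(m)}$, which is exactly the computation you write out in detail. The one point you rightly flag --- that $2(\theta_\alpha)^{(m)}(F)=\theta^{(m)}(F)+\theta^{(m)}(\alpha^{(m)}(F))^*$ only holds if the amplification of the antilinear map $\alpha$ is taken with the block transpose, $\alpha^{(m)}([f_{ij}])=[\alpha(f_{ji})]$ --- is glossed over in the paper as well, and with that convention (under which complete contractivity of $\alpha$ gives $\|\alpha^{(m)}(F)\|\le\|F\|$) your transcription goes through verbatim.
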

\begin{proof} Fix a positive integer $n$.
Remark~(i) following the proof of Theorem~\ref{T:basic} shows that the argument therein applies to the maps $\theta^{(n)}$ and $\alpha^{(n)}$. Hence $\|\theta^{(n)}\|\leq 2$ for every $n\geq 1$, which is equivalent to $\|\theta\|_{cb}\leq 2$.
\end{proof}

We note that $*$-homomorphisms on $C^*$-algebras are always completely contractive, so that Theorem \ref{T:vN} shows that Conjecture \ref{C:conjCB} is verified for commutative $C^*$-algebras. In contrast, we do not know if the proof of Theorem \ref{T:CGL} can be adapted to show that Conjecture \ref{C:conjCB} holds in the case of the disk algebra when $\alpha$ takes values in the scalar multiples of the identity.

In trying to verify Conjecture \ref{C:conjCB} for the disc algebra, the following well-known basic principle is useful.

\begin{lemma}\label{L:simz}
Let $\theta:A(\DD)\to B(\cH)$ be a unital completely bounded homomorphism and let $\psi$ be an automorphism of $\DD$. Then, $\|\theta\|_{cb}\leq 2$ if and only if there is an invertible operator $S\in B(\cH)$ with the property that $S\theta(\psi)S^{-1}$ is a contraction and that
$
\|S\| \|S^{-1}\|\leq 2.
$
\end{lemma}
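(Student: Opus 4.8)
The statement is a standard similarity/dilation characterization, so the plan is to exploit the structure theory of the disk algebra together with the known equivalence between completely bounded homomorphisms on $A(\DD)$ and polynomially bounded operators that are similar to contractions. First I would reduce to the case $\psi(z)=z$: since $\psi$ is an automorphism of $\DD$, the map $f\mapsto f\circ\psi$ is an isometric automorphism of $A(\DD)$ (indeed completely isometric), so replacing $\theta$ by $\theta\circ(\,\cdot\circ\psi^{-1})$ does not change $\|\theta\|_{cb}$, and $\theta(\psi)$ becomes the image of the coordinate function $z$. Thus it suffices to show that $\|\theta\|_{cb}\le 2$ if and only if there is an invertible $S$ with $S\theta(z)S^{-1}$ a contraction and $\|S\|\,\|S^{-1}\|\le 2$.

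\textbf{Key steps.} For the ``if'' direction, suppose such an $S$ exists, and put $V=S\theta(z)S^{-1}$, a contraction. Define $\rho(f):=Sf(\theta(z))S^{-1}=S\theta(f)S^{-1}$ for polynomials $f$; this is a unital homomorphism with $\rho(z)=V$, so $\rho(f)=f(V)$. By von Neumann's inequality (equivalently, the fact that the closed disk is a complete spectral set for a contraction) $\rho$ is completely contractive, hence extends to a complete contraction on $A(\DD)$ with $\|\rho\|_{cb}\le 1$. Then $\theta(f)=S^{-1}\rho(f)S$ for all $f$, so at each matrix level $\|\theta^{(m)}([f_{ij}])\|\le\|S^{-1}\|\,\|\rho^{(m)}([f_{ij}])\|\,\|S\|\le\|S\|\,\|S^{-1}\|\,\|[f_{ij}]\|$, giving $\|\theta\|_{cb}\le\|S\|\,\|S^{-1}\|\le 2$. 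For the ``only if'' direction, assume $\|\theta\|_{cb}\le 2$. Then $\theta$ is a completely bounded unital homomorphism on $A(\DD)$, so by Paulsen's similarity theorem there is an invertible $S$ with $\|S\|\,\|S^{-1}\|\le\|\theta\|_{cb}$ such that $f\mapsto S\theta(f)S^{-1}$ is completely contractive; in particular $S\theta(z)S^{-1}$ is a contraction and $\|S\|\,\|S^{-1}\|\le 2$. (One should also record that $\theta$ extends to $A(\DD)$ continuously, so that $\theta(z)$ has spectrum in $\overline{\DD}$ and the functional calculus is the obvious one; this is automatic from complete boundedness.)

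\textbf{Main obstacle.} The only genuine input is Paulsen's theorem that a completely bounded unital homomorphism into $B(\cH)$ is similar, via an $S$ with $\|S\|\,\|S^{-1}\|\le\|\theta\|_{cb}$, to a completely contractive one --- this is what upgrades an abstract cb bound to a concrete similarity bound, and without it the ``only if'' direction fails to produce the sharp constant $2$. The ``if'' direction is then essentially just von Neumann's inequality plus the observation that $\theta$ factors through a complete contraction. A minor technical point to dispatch is the reduction via $\psi$, namely checking that $f\mapsto f\circ\psi$ is completely isometric on $A(\DD)$, which follows because composition with a conformal automorphism is an isometric algebra automorphism and such maps are automatically completely isometric on uniform algebras.
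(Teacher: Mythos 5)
Your proof is correct and takes essentially the same route as the paper: Paulsen's similarity theorem supplies the ``only if'' direction, and the ``if'' direction is the matrix-valued von Neumann inequality applied to the contraction $S\theta(\psi)S^{-1}$ followed by undoing the similarity. The only cosmetic difference is that you normalize $\psi$ to the coordinate function at the outset via the completely isometric automorphism $f\mapsto f\circ\psi$, whereas the paper achieves the same effect by verifying complete contractivity of $\theta_S$ on the dense subalgebra $\{p\circ\psi : p \text{ polynomial}\}$.
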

\begin{proof}
Assuming that $\|\theta\|_{cb}\leq 2$, the existence of $S$ follows at once from Paulsen's similarity theorem \cite{paulsen1984PAMS}. Conversely, given an operator $S$ as in the statement, define $\theta_S:A(\DD)\to B(\cH)$ as
\[
\theta_S(f)=S\theta(f)S^{-1} \quad (f\in A(\DD)).
\]
 For a matrix of polynomials $[p_{ij}]\in M_n(A(\DD))$, we see that
\begin{align*}
\theta^{(n)}_S([p_{ij}\circ \psi])&=[S\theta(p_{ij}\circ\psi)S^{-1}]\\
&=[p_{ij}(S\theta(\psi)S^{-1})]
\end{align*}
so von Neumann's inequality implies that
\[
\|\theta^{(n)}_S([p_{ij}\circ \psi])\|\leq \|[p_{ij}]\|_{M_n(A(\DD))}= \|[p_{ij}\circ \psi]\|_{M_n(A(\DD))}.
\]
We conclude that $\theta_S$ is completely contractive on the dense subalgebra
\[
\{ p\circ \psi:p \text{ polynomial}\},
\]
and thus on the entire algebra $A(\DD)$. Consequently,
\[
\|\theta\|_{cb}\leq \|S\| \|S^{-1}\| \|\theta_S\|_{cb}\leq 2.\qedhere
\]
\end{proof}

We do not know of a good description of the class of operators $T\in B(\cH)$ for which  there is an invertible operator $S\in B(\cH)$ with the property that $STS^{-1}$ is a contraction and that
$
\|S\| \|S^{-1}\|\leq 2.
$
A sufficient condition for this property to hold was discovered in \cite{OA75}. 

We now give an instance where the previous lemma can be exploited.

\begin{proposition}\label{P:singleton}
Let $\theta:A(\DD)\to M_2(\CC)$ be a unital bounded homomorphism and let $\alpha:A(\DD)\to A(\DD)$ be a unital antilinear map. Define $\theta_\alpha:A(\DD)\to M_2(\CC)$ by \eqref{E:thetaalpha}.
Assume that $\|\theta_\alpha\|\le1$ and that the spectrum of $\theta(z)$ consists of a single point in $\DD$. Then $\|\theta\|_{cb}\leq 2$.
\end{proposition}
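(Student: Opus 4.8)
The plan is to reduce to Lemma~\ref{L:simz} by exhibiting a good similarity for $\theta(\psi)$, where $\psi$ is a suitably chosen automorphism of $\DD$. Since $\sigma(\theta(z))=\{w_0\}$ for some $w_0\in\DD$, I would first compose with the automorphism $\psi_{w_0}(z):=(z+w_0)/(1+\overline{w_0}z)$ so that the homomorphism $\tilde\theta(f):=\theta(f\circ\psi_{w_0})$ has $\tilde\theta(z)$ nilpotent; concretely, $\sigma(\tilde\theta(z))=\{0\}$, so in $M_2(\CC)$ the matrix $T:=\tilde\theta(z)$ is either $0$ or similar to a single Jordan block $\left(\begin{smallmatrix}0&1\\0&0\end{smallmatrix}\right)$. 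In the first case $\theta$ factors through evaluation at $w_0$ composed with $\psi_{w_0}^{-1}$, hence is a $*$-homomorphism up to unitary equivalence and $\|\theta\|_{cb}=1$; so the substance is the Jordan-block case. Note also that if $T=\tilde\theta(z)$ is a genuine $2\times 2$ Jordan block then $\tilde\theta$ (hence $\theta$) is automatically completely bounded, since $\tilde\theta(p)=p(T)$ for polynomials and these are bounded by $C\sup_{\overline\DD}|p|$; so the issue is entirely quantitative, namely getting the constant $2$.

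Next I would bring in the hypothesis $\|\theta_\alpha\|\le 1$, which is unchanged (up to replacing $\alpha$ by $f\mapsto \alpha(f\circ\psi_{w_0})\circ\psi_{w_0}^{-1}$, still a unital antilinear contraction) under the reduction above, and specialize it to $f=z$. Writing $T=\tilde\theta(z)$ and $R=\tilde\theta(\tilde\alpha(z))$, the constraint reads $\|T+R^*\|\le 2$. Since $T$ and $R$ lie in the commutant of $T$ (both are in the commutative algebra $\tilde\theta(A(\DD))$, and $R$ commutes with $T$), and the commutant of a $2\times 2$ Jordan block consists of upper-triangular Toeplitz matrices $\left(\begin{smallmatrix}a&b\\0&a\end{smallmatrix}\right)$, I can write $T$ after a similarity as $\left(\begin{smallmatrix}0&t\\0&0\end{smallmatrix}\right)$ and $R=\left(\begin{smallmatrix}a&b\\0&a\end{smallmatrix}\right)$. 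Then $T+R^*=\left(\begin{smallmatrix}\overline{a}&t\\ \overline{b}&\overline{a}\end{smallmatrix}\right)$, and I would extract from $\|T+R^*\|\le 2$ an inequality on $|t|$. The key point I expect: the norm bound forces $|t|$ to be controlled, and then the similarity $S=\diag(1,s)$ sends $T$ to $\left(\begin{smallmatrix}0&t/s\\0&0\end{smallmatrix}\right)$ with norm $|t|/s$, while $\|S\|\|S^{-1}\|=\max(1,s)\max(1,1/s)$; choosing $s=|t|$ (assuming $|t|\ge 1$, the only interesting range) makes $S\theta(\psi_{w_0})S^{-1}=\psi_{w_0}(STS^{-1})$ a contraction provided $STS^{-1}$ is, and $\|S\|\|S^{-1}\|=|t|$. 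So it suffices to show $|t|\le 2$.

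To prove $|t|\le 2$: among the scalars $a,b$ at our disposal, the contraction $\theta_\alpha$ also constrains $\tilde\alpha(1)=1$, so $R$ has $\tilde\theta(\tilde\alpha(1))=I$ as a reference point, but more usefully I would test $\theta_\alpha$ on $f=z$ directly. The matrix $\left(\begin{smallmatrix}\overline{a}&t\\ \overline{b}&\overline{a}\end{smallmatrix}\right)$ has norm at least that of its action on $e_1\mapsto \overline{a}e_1+\overline{b}e_2$ and on $e_2\mapsto t e_1+\overline{a}e_2$; in particular $\sqrt{|t|^2+|a|^2}\le \|T+R^*\|\le 2$, giving $|t|\le 2$ immediately — and even better, $|t|\le \sqrt{4-|a|^2}$. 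That is the whole argument; the hypothesis $\|\theta_\alpha\|\le 1$ applied to the single function $z$ already pins down $|t|$. The main obstacle I anticipate is not any single hard estimate but making sure the similarity reduction to a \emph{normalized} Jordan block is compatible with reading off $\|T+R^*\|$ — i.e.\ that the similarity conjugating $T$ to $\left(\begin{smallmatrix}0&t\\0&0\end{smallmatrix}\right)$ can be taken to also normalize things so that $R^*$ has the displayed shape and the norm inequality survives; concretely one checks that commutants of Jordan blocks are preserved and that the operator norm of $T+R^*$ is similarity-\emph{invariant} only up to the conjugating constant, so one must track that constant and fold it into the final $\|S\|\|S^{-1}\|$ bookkeeping. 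Once that is handled, Lemma~\ref{L:simz} delivers $\|\theta\|_{cb}\le 2$.
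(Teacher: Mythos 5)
Your proposal follows essentially the same route as the paper: compose with a M\"obius map sending the single spectral point to $0$, so that $\theta(\psi)$ becomes nilpotent; read off the single off-diagonal entry $\delta$; use $\|\theta_\alpha(\psi)\|\le\|\theta_\alpha\|\,\|\psi\|\le 1$ to get $|\delta|\le 2$; and then feed a diagonal similarity into Lemma~\ref{L:simz}. Two points of comparison. First, the ``main obstacle'' you flag at the end is real if you pass to Jordan form by a general similarity $V$: the hypothesis controls $\|T+R^*\|$ in the original coordinates, and $VTV^{-1}+(VRV^{-1})^*$ is \emph{not} $V(T+R^*)V^{-1}$, so the bound on $|t|$ would not transfer; moreover the conjugating constant of $V$ would pollute the $\|S\|\|S^{-1}\|\le 2$ bookkeeping in Lemma~\ref{L:simz}. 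The paper closes this in one stroke by using a \emph{unitary} simultaneous (Schur) triangularization of the commuting pair $\theta(\psi)$, $\theta(\alpha(\psi))$: unitary conjugation commutes with adjoints, preserves $\|T+R^*\|$, and does not change the condition number of the final $S$. Once you do that, you do not even need the Toeplitz form of $R$ --- it suffices that $R^*$ is lower triangular, so the $(1,2)$ entry of $\theta(\psi)+\theta(\alpha(\psi))^*$ is exactly $\delta$ and $|\delta|\le 2$ falls out of a single matrix entry. Second, a small slip: with $\sigma(\theta(z))=\{w_0\}$ you need the automorphism $z\mapsto(z-w_0)/(1-\overline{w_0}z)$, not $(z+w_0)/(1+\overline{w_0}z)$, to make $\theta(\psi)$ nilpotent. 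With those repairs your argument is the paper's proof.
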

\begin{proof}
By assumption, there is a $\lambda\in \DD$ such that $\sigma(\theta(z))=\{\lambda\}$. Let $\psi\in A(\DD)$ be defined by
\[
\psi(z):=\frac{z-\lambda}{1-\ol{\lambda}z} \quad (z\in \DD).
\]
Then the spectrum of $\theta(\psi)$ is $\{0\}$. Since $\theta(\psi)$ and $\theta(\alpha(\psi))$ are commuting matrices, we can assume that they are both upper triangular, and that the diagonal of $\theta(\psi)$ consists entirely of zeros. 
Hence there is a $\delta\in \CC$ along with $a,b,c\in \CC$ such that
\[
\theta(\psi)=\begin{pmatrix} 0 & \delta\\ 0 & 0 \end{pmatrix} \quad \text{and} \quad  \theta(\alpha(\psi))=\begin{pmatrix} a &b\\ 0 & c \end{pmatrix}.
\]
Then we have
\[
2\theta_\alpha(\psi)= \begin{pmatrix} \ol{a} & \delta\\ \ol{b} & \ol{c}\end{pmatrix}.
\]
Since $\|\theta_\alpha(\psi)\|\le \|\theta_\alpha\|\le1$, this last matrix has norm at most~$2$,
and in particular  $|\delta|\leq 2$. Let 
\[
S:=\begin{pmatrix} 1/2 & 0\\ 0 & 1\end{pmatrix}.
\] 
Then 
\[
S\theta(\psi)S^{-1}=\begin{pmatrix} 0 & \delta/2\\ 0 & 0 \end{pmatrix},
\]
so $S\theta(\psi)S^{-1}$ is a contraction and $\|S\| \|S^{-1}\|\leq 2$. Lemma \ref{L:simz} finally implies that $\|\theta\|_{cb}\leq 2$.
\end{proof}

We do not know how to handle the case where $\theta(z)$ in the previous theorem is allowed to have arbitrary spectrum. Even in the concrete case of the usual Crouzeix conjecture, the situation gets much more complicated; see \cite[Theorem 5.2]{BCD06}.

As mentioned earlier, the setting of Conjecture \ref{C:conjCB} allows for the use of dilation techniques; this is explored in the next result. 


\begin{theorem}\label{T:equiv}
Let $\theta:A(\DD)\to B(\cH)$ be a unital bounded homomorphism and let $\alpha:A(\DD)\to A(\DD)$ be a unital antilinear map. Define $\theta_\alpha:A(\DD)\to B(\cH)$ by
\[
\theta_\alpha(f):=\frac{1}{2}(\theta(f)+\theta(\alpha(f))^*) \quad (f\in A(\DD)).
\]
Consider the following statements:
\begin{enumerate}[\normalfont(i)]
\item The map $\theta_\alpha$ is completely contractive.
\item There is a unitary operator $U$ on a Hilbert space $\cK$ containing $\cH$ such that 
\[
\theta(f)+\theta(\alpha(f))^*=2P_{\cH}f(U)|_{\cH} \quad (f\in A(\DD)).
\]
\item The operator $\theta(z)$ admits a unitary $2$-dilation.
\item The numerical range of $\theta(z)$ is contained in the closed unit disk.
\item The estimate  $\|\theta\|_{cb}\leq 2$ holds.
\end{enumerate}
Then we have 
\[
(i) \Rightarrow (ii) \quad\text{and}\quad (iii) \Leftrightarrow (iv)\Rightarrow (v).
\]
If we assume in addition that $z^m\in \ker (\theta\circ \alpha)$ for every positive integer $m$, then we have 
\[
(i) \Rightarrow (ii)\Rightarrow (iii) \Leftrightarrow (iv)\Rightarrow (v).
\]
\end{theorem}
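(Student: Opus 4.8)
The plan is to establish the implications in the order $(iii)\Leftrightarrow(iv)$, then $(iii)\Rightarrow(v)$, then $(i)\Rightarrow(ii)$, and finally, under the extra hypothesis, $(ii)\Rightarrow(iii)$. The equivalence $(iii)\Leftrightarrow(iv)$ is not new: it is precisely the Berger--Kato--Sz.-Nagy--Foias characterization of the class $\cC_2$ cited just before Theorem~\ref{T:OA}, namely that an operator has a unitary $2$-dilation if and only if its numerical range lies in $\overline{\DD}$. So I would simply invoke \cites{NF66, Be65} here. For $(iii)\Rightarrow(v)$: if $\theta(z)$ has a unitary $2$-dilation, then $\overline{\DD}$ is a $2$-spectral set for $\theta(z)$ by Theorem~\ref{T:OA} (with $\rho=2$); but a bounded unital homomorphism on $A(\DD)$ is completely bounded with $\|\theta\|_{cb}$ controlled by the spectral constant of $\overline\DD$ for $\theta(z)$ — more precisely, since $\theta(f)=f(\theta(z))$ for polynomials $f$ and these are dense, and since for matrices of polynomials $[p_{ij}]$ one has $\theta^{(n)}([p_{ij}])=[p_{ij}(\theta(z))]$, the $2$-spectral set property upgrades automatically to a complete bound of $2$ by the matricial version of von Neumann's inequality / the standard fact that $K$ being a spectral set for a single operator on a space of the relevant type gives complete bounds (or, cleanly: $\theta(z)\in\cC_2$ means $\theta(z)$ dilates, hence $\|\theta^{(n)}\|\le 2$ for all $n$ by compressing the dilation). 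So $(iii)\Rightarrow(v)$.

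For $(i)\Rightarrow(ii)$: I would recognize $\theta_\alpha$ as a unital completely contractive map on $A(\DD)$ into $B(\cH)$, and apply the Arveson--Stinespring-type representation for completely contractive unital maps on the disk algebra. Concretely, by Arveson's dilation theorem a unital complete contraction $\theta_\alpha:A(\DD)\to B(\cH)$ has the form $\theta_\alpha(f)=P_{\cH}\pi(f)|_{\cH}$ where $\pi$ is a unital $*$-representation of $C(\TT)$ (equivalently, $\theta_\alpha(f)=P_\cH f(U)|_\cH$ for a unitary $U$ on some $\cK\supseteq\cH$): this is the operator-algebra incarnation of the statement that $\overline\DD$ is a complete spectral set iff it is a complete $M$-dilation scale, combined with the fact that $A(\DD)+\overline{A(\DD)}$ is dense in $C(\TT)$. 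Unwinding $2\theta_\alpha(f)=\theta(f)+\theta(\alpha(f))^*$ then gives statement $(ii)$ verbatim.

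The main obstacle is the last implication, $(ii)\Rightarrow(iii)$, which is exactly where the extra hypothesis ``$z^m\in\ker(\theta\circ\alpha)$ for all $m\ge 1$'' is used. Starting from $(ii)$, take $f(z)=z^m$: then $\theta(\alpha(z^m))^*=0$ by hypothesis, so $(ii)$ collapses to $\theta(z)^m=2P_\cH U^m|_\cH$ for every $m\ge 1$, which is literally the definition of $U$ being a unitary $2$-dilation of $\theta(z)$ — that is, $\theta(z)\in\cC_2$, which is $(iii)$. The delicate point I would double-check is that the kernel hypothesis is applied only to powers $z^m$ with $m\ge 1$ (not $m=0$, where $\theta(\alpha(1))^*=\theta(1)=I\ne 0$), and that this is consistent: indeed $(ii)$ at $f=1$ reads $I+I=2P_\cH I|_\cH$, which is automatic. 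So once the kernel condition is in force, $(ii)\Rightarrow(iii)$ is immediate, and chaining with the already-established arrows yields the full conclusion $(i)\Rightarrow(ii)\Rightarrow(iii)\Leftrightarrow(iv)\Rightarrow(v)$.
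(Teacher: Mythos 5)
Your overall architecture matches the paper's: (i)$\Rightarrow$(ii) via Arveson's extension theorem plus Stinespring applied to the unital complete contraction $\theta_\alpha$ on $A(\DD)\subset C(\TT)$, taking $U=\pi(z)$; (ii)$\Rightarrow$(iii) by evaluating at $f=z^m$ and invoking the kernel hypothesis; (iii)$\Leftrightarrow$(iv) by the Berger--Sz.-Nagy--Foias characterization of the class $\cC_2$. Those steps are fine and essentially identical to the paper's proof.

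The gap is in (iii)$\Rightarrow$(v), and neither of your two suggested arguments actually delivers the constant $2$. First, ``compressing the dilation'' does not give $\|\theta^{(n)}\|\le 2$: from $\theta(z)^m=2P_\cH U^m|_\cH$ for $m\ge1$ one gets, for a polynomial $p$, that $p(\theta(z))=2P_\cH p(U)|_\cH-p(0)I$, and likewise for matrices of polynomials $[p_{ij}(\theta(z))]=2P_{\cH^{(n)}}[p_{ij}(U)]|_{\cH^{(n)}}-[p_{ij}(0)]$; the constant-term correction means the compression only yields the bound $3\|[p_{ij}]\|$, not $2\|[p_{ij}]\|$. Second, the assertion that the $2$-spectral-set property of $\overline{\DD}$ for $\theta(z)$ ``upgrades automatically'' to a complete bound with the same constant is not a standard fact and cannot be taken for granted (whether a $K$-spectral set is automatically a complete $K$-spectral set with the same constant is a well-known open issue; the constant-$1$ case for the disk is special). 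Note also that you cannot fall back on a matricial version of Theorem~\ref{T:maeva}, since $M_n(A(\DD))$ is not a uniform algebra and $\cH$ need not be finite-dimensional here. The paper closes this step differently: the Okubo--Ando theorem of \cite{OA75} gives not merely that $\overline{\DD}$ is a $2$-spectral set for a $\cC_2$ operator, but that such an operator is similar to a contraction via an invertible $S$ with $\|S\|\,\|S^{-1}\|\le 2$; feeding this into Lemma~\ref{L:simz} (which rests on Paulsen's similarity theorem and the matricial von Neumann inequality) yields $\|\theta\|_{cb}\le 2$. You should replace your (iii)$\Rightarrow$(v) argument by this similarity route.
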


\begin{proof}
Assume that (i) holds. Recall that $A(\DD)\subset C(\TT)$.
By combining Arveson's extension theorem \cite[Corollary 7.6]{paulsen2002} with Stinespring's dilation theorem \cite[Theorem 4.1]{paulsen2002}, we obtain a Hilbert space $\cK$ containing $\cH$ and unital $*$-homomorphism $\pi:C(\TT)\to B(\cK)$ such that
\[
\frac{1}{2}\bigl(\theta(f)+\theta(\alpha(f))^*\bigr)=\theta_\alpha(f)=P_{\cH}\pi(f)|_{\cH} \quad (f\in A(\DD)).
\]
Choosing $U:=\pi(z)$, which is unitary, we see that (ii) holds. Furthermore, we see that (ii) implies (iii) provided that $\theta(\alpha(z^m))=0$ for every $m\geq 1$.

The equivalence of (iii) and (iv) is found in  \cite[Theorem I.11.2]{nagy2010}. 

The fact that (iii) implies (v) follows from \cite{OA75} along with Lemma \ref{L:simz}.
\end{proof}

We conclude by remarking  that, when $\alpha(f)=\ol{f(0)}I$, then (i), (ii), (iii) and (iv) above are equivalent; see \cite[Theorem 3.15]{paulsen2002}.


\bibliographystyle{plain}
\bibliography{biblist}

\end{document}